\theoremstyle{plain}
\newtheorem{teo}{Theorem}
\newtheorem{lema}[teo]{Lemma}
\theoremstyle{definition}
\theoremstyle{remark}
\newtheorem{obs}{Remark}
\numberwithin{equation}{section}
\newcommand{\R}{\mathbb{R}}
\newcommand{\supp}{\mathrm{supp}}
\begin{document}

	\title[Fefferman-Stein type Mixed inequalities]{Mixed inequalities of Fefferman-Stein type for singular integral operators}

	% Information for first author
	\author[F. Berra]{Fabio Berra}
	\address{CONICET and Departamento de Matem\'{a}tica (FIQ-UNL),  Santa Fe, Argentina.}
	\email{fberra@santafe-conicet.gov.ar}
	
	% Information for second author
	\author[M. Carena]{Marilina Carena}
	\address{CONICET and Departamento de Matem\'{a}tica (FIQ-UNL),  Santa Fe, Argentina.}
	\email{marilcarena@gmail.com}
	
	% Information for third author
	\author[G. Pradolini]{Gladis Pradolini}
	\address{CONICET and Departamento de Matem\'{a}tica (FIQ-UNL),  Santa Fe, Argentina.}
	\email{gladis.pradolini@gmail.com}
	
	\thanks{The authors were supported by CONICET, UNL and ANPCyT}
	
	\subjclass[2010]{42B20, 42B25}
	
	\keywords{Calderón-Zygmund operators, Young functions, Muckenhoupt weights}
	
	%-------------ABSTRACT ------------------------------------
	\begin{abstract}
		We give Feffermain-Stein type inequalities related to mixed estimates for Calderón-Zygmund operators. More precisely, given $\delta>0$, $q>1$, $\varphi(z)=z(1+\log^+z)^\delta$, a nonnegative and locally integrable function $u$ and $v\in \mathrm{RH}_\infty\cap A_q$, we prove that the inequality
		\[uv\left(\left\{x\in \mathbb{R}^n: \frac{|T(fv)(x)|}{v(x)}>t\right\}\right)\leq \frac{C}{t}\int_{\mathbb{R}^n}|f|\left(M_{\varphi, v^{1-q'}}u\right)M(\Psi(v))\]
		holds with $\Psi(z)=z^{p'+1-q'}\mathcal{X}_{[0,1]}(z)+z^{p'}\mathcal{X}_{[1,\infty)}(z)$, for every $t>0$ and every $p>\max\{q,1+1/\delta\}$. This inequality provides a more general version of mixed estimates for Calderón-Zygmund operators proved in \cite{CruzUribe-Martell-Perez}. It also generalizes the Fefferman-Stein estimates given in \cite{P94} for the same operators.
		
		We further get similar estimates for operators of convolution type with kernels satisfying an $L^\Phi-$Hörmander condition, generalizing some previously known results which involve mixed estimates and Fefferman-Stein inequalities for these operators.
	\end{abstract}
	
	\maketitle

	\section{Introduction and main results}
	
	In 1985, E. Sawyer proved an endpoint estimate on the real line for the Hardy-Littlewood maximal operator $M$ which involved two different weights (see \cite{Sawyer}). More precisely, if $u,v\in A_1$ then the inequality
	\begin{equation}\label{eq: intro - mixta de Sawyer}
	uv\left(\left\{x\in\mathbb{R}: \frac{M(fv)(x)}{v(x)}>t\right\}\right)\leq \frac{C}{t}\int_{\mathbb{R}}|f(x)|u(x)v(x)\,dx
	\end{equation}
	holds for every positive $t$. This estimate, which can be seen as the weak $(1,1)$ type inequality of $Sf=M(fv)/v$ with respect to the measure $d\mu(x)=u(x)v(x)\,dx$, allowed to give an alternative proof of the boundedness of $M$ in $L^p(w)$ when $w\in A_p$, a result due to Muckenhoupt in \cite{Muck72}. Different extensions of \eqref{eq: intro - mixta de Sawyer} were obtained, see for example \cite{CruzUribe-Martell-Perez}, \cite{O-P} and \cite{L-O-P} for $M$ and Calderón-Zygmund operators (CZO), \cite{Berra-Carena-Pradolini(M)} for commutators of CZO, \cite{Berra-Carena-Pradolini(J)} for fractional operators, \cite{Berra} and \cite{Berra-Carena-Pradolini(MN)} for generalized maximal operators associated to Young functions.
	
	On the other hand, in \cite{FS71} it was shown that if $w$ is a nonnegative and locally integrable function and $1<p<\infty$ then
	\[\int_{\mathbb{R}^n} (Mf(x))^pw(x)\,dx\leq C\int_{\mathbb{R}^n}|f(x)|^pMw(x)\,dx,\]
	where $C$ depends only on $p$. We shall refer to this type of estimate as Fefferman-Stein inequalities. Regarding CZO, a first result due to Córdoba and Fefferman \cite{CF76} established that if $w$ is a nonnegative and locally integrable function then
	\[\int_{\mathbb{R}^n} |Tf(x)|^pw(x)\,dx\leq C_{p,r}\int_{\mathbb{R}^n}|f(x)|^pM(M_r w)(x)\,dx,\]
	for $1<p,r<\infty$. Later on, Wilson  improved the estimate above in \cite{W89} for rough singular integrals, obtaining the operator $M^2$ on the right-hand side, which is pointwise lesser than $M(M_r)$. Another estimates for CZO were proved by Pérez in \cite{P94}, where the maximal operators involved are related to Young functions satisfying certain properties  (see Theorem~\ref{teo: tipo fuerte de T con peso arbitrario}).
	
	Concerning Fefferman-Stein estimates for mixed inequalities, in \cite{Berra-Carena-Pradolini(M)} we prove a result involving a radial power function $v$ that fails to be locally integrable in $\mathbb{R}^n$ and a nonnegative function $u$ given by 
	\[uw\left(\left\{x\in\mathbb{R}^n: \frac{M_{\Phi}(fv)(x)}{v(x)}>t\right\}\right)\leq C\int_{\mathbb{R}^n}\Phi\left(\frac{|f|v}{t}\right)Mu,\]
	where $\Phi$ is a Young function of $L\log L$ type and $w$ depends on $v$ and $\Phi$.
	 This result generalizes a previous estimate proved in \cite{O-P}, where  the authors exhibit a counterexample for the Hardy-Littlewood maximal operator $M$, showing that the estimate above fails to be true for pairs $(u,M^2u)$ and $v$ in $\mathrm{RH}_\infty$.
	
	In this paper we study Fefferman-Stein inequalities for mixed estimates involving CZO. We shall be dealing with a linear operator $T$, bounded on $L^2=L^2(\R^n)$ and such that for $f\in L^2$ with compact support we have the representation
	\begin{equation}\label{eq: representacion integral de T}Tf(x)=\int_{\R^n}K(x-y)f(y)\,dy ,\quad\quad x\notin \supp f,\end{equation}
	where $K\colon\mathbb{R}^n\backslash\{0\}\to\mathbb{C}$ is a measurable function defined away from the origin. We say that $T$ is a CZO if $K$ is a standard kernel, which means that it satisfies a size condition given by 
	\[|K(x)|\lesssim \frac{1}{|x|^n},\]
	and the following smoothness condition also holds 
	\begin{equation}\label{eq:prop del nucleo}
	|K(x-y)-K(x-z)|\lesssim \frac{|x-z|}{|x-y|^{n+1}},\quad \textrm{ if } |x-y|>2|y-z|.
	\end{equation}
	The notation $A\lesssim B$ means, as usual, that there exists a positive constant $c$ such that $A\leq cB$. When $A\lesssim B$ and $B\lesssim A$ we shall write $A\approx B$.
	
	We are now in a position to state our main results.
	\begin{teo}\label{teo: acotacion mixta general para T}
		Let $0\leq u\in L^1_{\rm loc}$, $q>1$ and $v\in \mathrm{RH}_\infty\cap A_q$. Let $T$ be a CZO, $\delta>0$ and $\varphi(z)=z(1+\log^+z)^\delta$. Then for every $p>\max\{q,1+1/\delta\}$ the inequality
		\[uv\left(\left\{x\in \mathbb{R}^n: \frac{|T(fv)(x)|}{v(x)}>t\right\}\right)\leq \frac{C}{t}\int_{\mathbb{R}^n}|f(x)|M_{\varphi, v^{1-q'}}u(x)M(\Psi(v))(x)\,dx\]
		holds for every positive $t$ and every bounded function $f$ with compact support, where $\Psi(z)=z^{p'+1-q'}\mathcal{X}_{[0,1]}(z)+z^{p'}\mathcal{X}_{[1,\infty)}(z)$.   
	\end{teo}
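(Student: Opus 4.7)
The strategy is to adapt the standard Calder\'on--Zygmund decomposition to the measure $v\,dx$, splitting $T(fv)$ into a ``good'' and a ``bad'' part, and to handle the good part through the Fefferman--Stein inequality for CZO of \cite{P94} (Theorem~\ref{teo: tipo fuerte de T con peso arbitrario}) with $\varphi(z)=z(1+\log^+z)^\delta$.

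After assuming $f\geq 0$ by the usual reductions, since $v\in A_q$ the measure $v\,dx$ is doubling, so I can perform a Calder\'on--Zygmund decomposition of $f$ relative to $v\,dx$ at level $t$: this yields disjoint dyadic cubes $\{Q_j\}$ satisfying $t<\frac{1}{v(Q_j)}\int_{Q_j}fv\leq Ct$ and $f\leq Ct$ a.e.\ on $\Omega^c$, where $\Omega=\bigcup_jQ_j$. Setting $g=f\chi_{\Omega^c}+\sum_j f_{Q_j,v}\chi_{Q_j}$, $b=\sum_jb_j$ with $b_j=(f-f_{Q_j,v})\chi_{Q_j}$ (so $\int b_j v=0$), and $\Omega^*=\bigcup_j\tilde Q_j$ for a suitable dilation, I split
\[uv\!\left\{\tfrac{|T(fv)|}{v}>t\right\}\leq uv(\Omega^*)+uv\!\left(\left\{\tfrac{|T(gv)|}{v}>\tfrac{t}{2}\right\}\cap\Omega^{*c}\right)+uv\!\left(\left\{\tfrac{|T(bv)|}{v}>\tfrac{t}{2}\right\}\cap\Omega^{*c}\right).\]

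The term $uv(\Omega^*)$ is controlled cube by cube: the hypothesis $v\in\mathrm{RH}_\infty$ lets me replace $v$ by its average on $\tilde Q_j$, while the level inequality $tv(Q_j)<\int_{Q_j}fv$ recasts $uv(\tilde Q_j)$ as an integral of $f$ over $Q_j$ against a weight of shape $(u(\tilde Q_j)/|Q_j|)\cdot v$. A generalized H\"older inequality with respect to the doubling measure $v^{1-q'}dx$ (which is $A_{q'}$ because $v\in A_q$), using $\varphi$ and its complementary function, then produces the bound by $\int_{Q_j}f\cdot M_{\varphi,v^{1-q'}}u\cdot M(\Psi(v))$. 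The bad part is reduced to the same form: the cancellation $\int b_jv=0$ combined with the H\"ormander smoothness \eqref{eq:prop del nucleo} gives, for $x\notin\tilde Q_j$, $|T(b_jv)(x)|\lesssim \ell(Q_j)|x-x_{Q_j}|^{-n-1}\int_{Q_j}|b_j|v$. Summing over $j$, applying Chebyshev against the measure $uv\,dx$, and using $\int_{Q_j}|b_j|v\leq 2\int_{Q_j}fv$ reduces matters to the same type of estimate, modulo a standard geometric tail argument.

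For the good part, since $g\leq Ct$ a.e., Chebyshev with exponent $p>\max\{q,1+1/\delta\}$ yields
\[uv\!\left(\left\{\tfrac{|T(gv)|}{v}>\tfrac{t}{2}\right\}\right)\leq\Big(\tfrac{2}{t}\Big)^p\!\int|T(gv)|^p u\,v^{1-p}.\]
The hypothesis $p>1+1/\delta$ is exactly what is needed to invoke the Fefferman--Stein inequality of \cite{P94} with Young function $\varphi$, giving $\int|T(gv)|^pu v^{1-p}\lesssim\int|gv|^pM_\varphi(uv^{1-p})$. Absorbing $p-1$ copies of $gv$ into the factor $(Ct)^{p-1}v^{p-1}$ collapses $t^{-p}\cdot t^{p-1}$ to $t^{-1}$ and leaves $\frac{C}{t}\int g\,v^p M_\varphi(uv^{1-p})$; the proof is then completed by the pointwise inequality $v^p M_\varphi(uv^{1-p})\lesssim M_{\varphi,v^{1-q'}}u\cdot M(\Psi(v))$ together with an averaging step that handles the pieces of $g$ living on the cubes $Q_j$.

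The main obstacle will be establishing this last pointwise/averaged comparison, together with the analogous cube-by-cube version that appears in the $uv(\Omega^*)$ and bad-part terms. The decomposition $v^{1-p}=v^{1-q'}\cdot v^{q'-p}$ motivates it --- and this is where the restriction $p>q$ enters --- but redistributing these powers through the Orlicz H\"older inequality forces one to treat the regimes $\{v\leq 1\}$ and $\{v>1\}$ separately, which is exactly what dictates the piecewise definition of $\Psi$. The remaining steps are technical adaptations of the mixed-inequality machinery of \cite{CruzUribe-Martell-Perez} combined with the Fefferman--Stein framework of \cite{P94}.
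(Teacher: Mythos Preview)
Your overall architecture matches the paper's: Calder\'on--Zygmund decomposition with respect to $v\,dx$, the three-term splitting $uv(\Omega^*)+\text{good}+\text{bad}$, and P\'erez's Fefferman--Stein inequality for the good part. The genuine gap is in the good-part estimate, where you use the wrong Chebyshev exponent.

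You raise to the power $p$ and claim that $p>1+1/\delta$ is precisely the hypothesis needed to apply Theorem~\ref{teo: tipo fuerte de T con peso arbitrario} at exponent $p$ with $\varphi(z)=z(1+\log^+z)^\delta$. This is false: the P\'erez condition for $\int|Tf|^r w\lesssim\int|f|^r M_\varphi w$ is $\int_e^\infty(t/\varphi(t))^{r'-1}\,dt/t<\infty$, which for this $\varphi$ reads $\delta(r'-1)>1$, i.e.\ $r<1+\delta$. With $r=p$ this forces $p<1+\delta$, which for $\delta\le 1$ is incompatible with $p>1+1/\delta$. The paper instead applies Chebyshev with exponent $p'$, obtaining $\int|T(gv)|^{p'}u\,v^{1-p'}$; then the P\'erez condition becomes $p'<1+\delta$, which is \emph{equivalent} to $p>1+1/\delta$.

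The error propagates to the pointwise step. With exponent $p$ you would need $M_\varphi(u v^{1-p})(x)\lesssim M_{\varphi,v^{1-q'}}u(x)\cdot v^{-p}(x)\,\Psi(v(x))$. Mimicking the paper's argument (split on the size of the power of $v$, then use Lemma~\ref{lema: acotacion LlogL por potencia} to pass from $(1-p)(1+\varepsilon)$ to $1-q'$) requires $\varepsilon=(q'-p)/(p-1)>0$, i.e.\ $q'>p$, which is equivalent to $p'>q$ and is \emph{not} implied by $p>q$ (take $q=2$, $p=3$). With the correct exponent $p'$ the analogous $\varepsilon=(q'-1)/(p'-1)-1$ is positive exactly because $p>q$; this is also why $v^{1-p'}\in A_1$ (Lemma~\ref{lema: potencia negativa de RHinf en A1 y positivas en RHinf}) enters naturally, while $v^{1-p}$ need not be $A_1$. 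Swap $p$ for $p'$ in the Chebyshev step and your outline becomes the paper's proof.
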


When $v=1$ the theorem above gives the result proved in \cite{P94} for CZO. It also corresponds to the case $m=0$ of the commutator operator given in \cite{PP01}. This type of estimate is not only an extension of the well-known weak endpoint inequality for the operator $T$ but also provides an estimate of the type $(u,\tilde{M}u)$ for mixed inequalities, where $\tilde{M}$ is an adequate maximal function.

We shall also consider operators as in \eqref{eq: representacion integral de T} associated to kernels with less regularity properties,  which appeared in the study of Coifman type estimates for these operators. It was proved in \cite{MPT05} that the classical Hörmander condition  on the kernel fails to achieve the desired estimate (see also \cite{Lorente-Riveros-delaTorre05}). We now introduce the notation related to this topic. Given a Young function $\varphi$, we denote
\[\|f\|_{\varphi,|x|\sim s}=\left\|f\mathcal{X}_{|f|\sim s}\right\|_{\varphi, B(0,2s)}\]
where $|x|\sim s$ means that $s<|x|\leq 2s$ and $\|\cdot\|_{\varphi,B(0,2s)}$ denotes the Luxemburg average over the ball $B(0,2s)$ (see Section~\ref{seccion: preliminares} for further details). 

We say that $K$ satisfies the $L^{\varphi}-$Hörmander condition, and we denote it by $K\in H_\varphi$, if there exist constants $c\geq 1$ and $C_\varphi>0$ such that the inequality
\begin{equation}\label{eq: condicion Hormander}
\sum_{k=1}^\infty (2^kR)^n\|K(\cdot-y)-K(\cdot)\|_{\varphi,|x|\sim 2^kR}\leq C_\varphi
\end{equation}
holds for every $y\in\mathbb{R}^n$ and $R>c|y|$. When $\varphi(t)=t^r$, $r\geq 1$, we write $H_\varphi=H_r$. 

In \cite{Lorente-Martell-Perez-Riveros} the authors prove certain Fefferman-Stein inequalities for these type of operators. Concretely, if $\Phi$ is a Young function and there exists $1<p<\infty$ and Young functions   $\eta,\varphi$ such that $\eta\in B_{p'}$ and $\eta^{-1}(z)\varphi^{-1}(z)\lesssim \tilde\Phi^{-1}(z)$ for $z\geq z_0\geq 0$, then the inequality
\begin{equation}\label{eq: F-S para T Hormander}
w\left(\left\{x\in \mathbb{R}^n: |T(fv)(x)|>t\right\}\right)\leq \frac{C}{t}\int_{\mathbb{R}^n}|f(x)|M_{\varphi_p}w(x)\,dx
\end{equation}
holds with $\varphi_p(z)=\varphi(z^{1/p})$, and where $\tilde\Phi$ is the complementary Young function of $\Phi$ (see Section~\ref{seccion: preliminares}).  

Given $0<p<\infty$, we say that a Young function $\varphi$ has an upper type $p$ if there exists a positive constant $C$ such that $\varphi(st)\leq Cs^p\varphi(t)$, for every $s\geq 1$ and $t\geq0$. If $\varphi$ has an upper type $p$ then has an upper type $q$, for every $q\geq p$. We also say that $\varphi$ has a lower type $p$ if there exists $C>0$ such that the inequality $\varphi(st)\leq Cs^p\varphi(t)$ holds for every $0\leq s\leq 1$ and $t\geq0$. When $\varphi$ has a lower type $p$ it also has a lower type $q$ for every $q\leq p$.

For operators associated to kernels satisfying a regularity of Hörmander type we have the following result.

\begin{teo}\label{teo: F-S para T Hormander}
	Let $\Phi$ be a Young function such that $\tilde\Phi$ has an upper type $r$ and a lower type $s$, for some $1<s<r$. Let $T$ be an operator as in \eqref{eq: representacion integral de T}, with kernel $K\in H_{\Phi}$. Assume that there exist $1<p<r'$ and Young functions $\eta,\varphi$ such that $\eta\in B_{p'}$ and $\eta^{-1}(z)\varphi^{-1}(z)\lesssim \tilde\Phi^{-1}(z)$, for every $z\geq z_0$. If $0\leq u\in L^1_{\rm loc}$ and $v\in \mathrm{RH}_\infty\cap A_q$ with $q=1+(p-1)/r$ then the inequality
	\[uv\left(\left\{x\in \mathbb{R}^n: \frac{|T(fv)(x)|}{v(x)}>t\right\}\right)\leq \frac{C}{t}\int_{\mathbb{R}^n}|f(x)|M_{\varphi_p, v^{1-q'}}u(x)M(\Psi(v))(x)\,dx\]
	holds for every $t>0$, where $\varphi_p(z)=\varphi(z^{1/p})$ and $\Psi(z)=z^{p'+1-q'}\mathcal{X}_{[0,1]}(z)+z^{p'}\mathcal{X}_{[1,\infty)}(z)$.
\end{teo}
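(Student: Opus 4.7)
The plan is to follow the scheme of Theorem~\ref{teo: acotacion mixta general para T}, replacing the standard pointwise smoothness of the kernel by the $L^\Phi$-Hörmander condition and by the weighted $L^p$-theory for $T$ encoded in~\eqref{eq: F-S para T Hormander}, which the hypotheses on $\eta,\varphi,\Phi$ and $p$ provide. By homogeneity we may assume $t=1$ and $f\geq 0$, so it suffices to estimate $uv(\{|T(fv)|>v\})$. Setting $F=fv$, we perform a Calderón-Zygmund decomposition of $F$ at a height $\lambda$ chosen by means of $v\in\mathrm{RH}_\infty$, so that Lebesgue averages of $F$ on a cube translate into averages of $f$ times the essential supremum of $v$ over the cube. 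This yields disjoint cubes $\{Q_j\}$, a good part $G$ with $\|G\|_\infty\lesssim\lambda$, and a bad part $B=\sum_j B_j$ supported on the $Q_j$ and satisfying $\int B_j=0$. Enlarging each $Q_j$ by the constant $c$ from the Hörmander condition produces $\tilde Q_j$, and we set $E=\bigcup_j\tilde Q_j$.

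The set $E$ contributes $\sum_j uv(\tilde Q_j)$, which via the stopping inequality $\int_{Q_j}F\geq\lambda|Q_j|$, the $A_q\cap\mathrm{RH}_\infty$ behaviour of $v$, and the splitting of $Q_j$ according to $\{v<1\}$ versus $\{v\geq 1\}$ delivers exactly $\int |f|\,M_{\varphi_p,v^{1-q'}}u\,M(\Psi(v))$; the two branches of $\Psi$ are forced by this dichotomy. The good-part term is controlled by Chebyshev at level $v/2$ together with the weighted $L^p$-boundedness of $T$ coming from $\eta\in B_{p'}$ and $\eta^{-1}(z)\varphi^{-1}(z)\lesssim\tilde\Phi^{-1}(z)$, which after a Coifman-Fefferman comparison with $M_\eta$ gives an inequality of the form $\int |TG|^p w\lesssim\int |G|^p M_{\varphi_p}w$ for $w\in A_\infty$; applied with $w$ built out of $u$ and $v$ and using $\|G\|_\infty\lesssim\lambda$, this yields the desired bound for the good part.

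The heart of the proof is the bad-part contribution off $E$. Cancellation of $B_j$ and Chebyshev reduce matters to bounding $\sum_j\int_{\tilde Q_j^c}|TB_j(x)|\,u(x)\,dx$; for $x\notin\tilde Q_j$ one has
\[|TB_j(x)|\leq\int_{Q_j}|B_j(y)|\,|K(x-y)-K(x-y_j)|\,dy,\]
and on each dyadic annulus $|x-y_j|\sim 2^k\ell(Q_j)$ we apply Hölder's inequality in Luxemburg norms with the pair $(\Phi,\tilde\Phi)$, producing
\[(2^k\ell(Q_j))^n\,\|K(\cdot-y)-K(\cdot-y_j)\|_{\Phi,|x|\sim 2^k\ell(Q_j)}\,\|u\|_{\tilde\Phi,|x|\sim 2^k\ell(Q_j)}.\]
Summing in $k$ absorbs the kernel factor into $C_\Phi$, leaving a $\tilde\Phi$-average of $u$ that must be compared to $M_{\varphi_p,v^{1-q'}}u\,M(\Psi(v))$. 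This final comparison is the main obstacle: one must verify that the upper type $r$ and the lower type $s$ of $\tilde\Phi$, combined with the reverse-Hölder character of $v$ and the identity $q=1+(p-1)/r$, precisely match the rescaling $\varphi_p(z)=\varphi(z^{1/p})$ so that the $p$-scaling arising in the Luxemburg Hölder step is exactly compensated. Once this bookkeeping is carried out, combining the three contributions and undoing the normalisation $t=1$ completes the argument.
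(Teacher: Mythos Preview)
Your overall architecture is right (Calder\'on--Zygmund decomposition, good/bad split, generalized H\"older with $(\Phi,\tilde\Phi)$ on dyadic annuli for the bad part), but the treatment of the good part has a genuine gap, and the roles of the hypotheses are misplaced.

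\textbf{The good part.} You claim an inequality of the form $\int |TG|^{p}w\lesssim\int|G|^{p}M_{\varphi_p}w$ ``for $w\in A_\infty$'', and then say you will apply it with ``$w$ built out of $u$ and $v$''. But after Chebyshev the weight that arises is $u^*v^{1-p'}$ (here $u^*=u\mathcal{X}_{\mathbb{R}^n\setminus\Omega^*}$), and since $u$ is an \emph{arbitrary} nonnegative locally integrable function this weight is not in $A_\infty$, so the Coifman comparison cannot be invoked directly. The paper's device is to dominate $u^*v^{1-p'}\leq M_s(u^*v^{1-p'})$; because $\tilde\Phi$ has lower type $s>1$, the right side is an $A_1$ weight and Theorem~\ref{teo: tipo Coifman T Hormander} applies with exponent $p'$ (not $p$). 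Then the upper type $r$ gives $M_{\tilde\Phi}\lesssim M_r$, and since $p'>r$ and $M_s(u^*v^{1-p'})\in A_1\subset A_{p'/r}$, the maximal function $M_r$ is bounded on $L^{p'}$ of that weight. After this one uses $M_s\lesssim M_{\tilde\Phi}$ again and the pointwise estimate
\[
M_{\tilde\Phi}(u^*v^{1-p'})(x)\lesssim M_{\varphi_p,v^{1-q'}}u^*(x)\,v^{-p'}(x)\,\Psi(v(x)),
\]
whose proof on the region $\{v^{1-p'}>1\}$ uses the upper type $r$ together with the algebraic identity $r(1-p')=1-q'$, which is exactly the content of $q=1+(p-1)/r$. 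None of this is visible in your sketch.

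\textbf{The bad part.} Conversely, you describe the final comparison for the bad part as ``the main obstacle'' requiring the upper/lower types and the relation for $q$. In fact, once the H\"ormander step gives $F_j(y)\lesssim M_{\tilde\Phi}u_j^*(y)$, the comparison is immediate: Remark~\ref{obs: relacion entre funciones de Young} yields $\tilde\Phi\lesssim\varphi_p$, hence $M_{\tilde\Phi}\lesssim M_{\varphi_p}$, and then Lemma~\ref{lema: relacion maximal generalizada con peso A1} with $v^{1-q'}\in A_1$ gives $M_{\varphi_p}\lesssim M_{\varphi_p,v^{1-q'}}$; finally $v\leq\Psi(v)\leq M(\Psi(v))$. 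So the heavy lifting with the type exponents and $q$ lives in the good part, not the bad one.

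\textbf{Minor point.} The paper decomposes $f$ with respect to $d\mu=v\,dx$, producing cancellation $\int h_jv=0$; you decompose $F=fv$ with respect to Lebesgue measure. Both can be made to work, but your version would still require $v\in\mathrm{RH}_\infty$ to pass between $\frac{1}{|Q_j|}\int_{Q_j}fv$ and $\frac{1}{v(Q_j)}\int_{Q_j}fv$ when closing the estimates, and this is not spelled out.
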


We now give an example in order to show that the class of functions satisfying the hypotheses on Theorem~\ref{teo: F-S para T Hormander} is nonempty. Let $r>1$, $1<p<r'$, $\delta\geq 0,$ $0<\varepsilon<\min\{r-1,p'-r\}$ and $\tilde \Phi(t)=t^{r-\varepsilon}(1+\log^+t)^\delta$. Observe that $\Phi\approx \tilde{\tilde \Phi}$, so $\Phi$ is a Young function since it is the complementary of a Young function. We also take $\eta(t)=t^{p'-\tau}$, with $0<\tau<p'-r-\varepsilon$. Then we have that $\tilde\Phi$ has upper type $r$ and a lower type $s$ for every $1<s<r$,  and $\eta\in B_{p'}$. Furthermore,
\[\eta^{-1}(t)\approx t^{1/(p'-\tau)} \quad\textrm{ and }\quad \tilde\Phi^{-1}(t)\approx t^{1/(r'-\varepsilon)(\log t)^{-\delta/(r-\varepsilon)}}\quad \textrm{ for } t\geq e.\]
Therefore, if we take $\varphi(t)=t^q(1+\log^+t)^{\delta q/(r-\varepsilon)}$ where $1/q=1/(r-\varepsilon)-1/(p'-\tau)$ we have the relation $\eta^{-1}(t)\varphi^{-1}(t)\approx \tilde\Phi^{-1}(t)$, for $t\geq e$.

Theorem~\ref{teo: F-S para T Hormander} can be seen as a generalization of \eqref{eq: F-S para T Hormander}, corresponding to $v=1$.

\begin{obs}\label{obs: relacion entre funciones de Young}
	From the hypothesis we have that $\varphi_p(t)\gtrsim \tilde \Phi(t)\geq t$ for $t\geq t_0$. The second inequality is immediate since $\tilde\Phi$ is a Young function. For the first one, given $t\geq t_0$ we can see that $\eta(t)\lesssim t^{p'}$ since $\eta\in B_{p'}$. This implies that $t^{1/p'}\varphi^{-1}(t)\lesssim t$ or equivalently, $\varphi^{-1}(t)\lesssim t^{1/p}$. Then, again by hypothesis
	\[\tilde\Phi^{-1}(t)\gtrsim t^{1/p'}\varphi^{-1}(t)\gtrsim \left(\varphi^{-1}(t)\right)^{p/p'}\varphi^{-1}(t)=\left(\varphi^{-1}(t)\right)^p=\varphi_p^{-1}(t),\]
	which directly implies that $\varphi_p(t)\gtrsim \tilde\Phi(t)$. These relations will be useful in the proof of Theorem~\ref{teo: F-S para T Hormander}.
\end{obs}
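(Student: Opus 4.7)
The plan is to verify the two inequalities $\tilde\Phi(t)\geq t$ and $\varphi_p(t)\gtrsim\tilde\Phi(t)$ separately, both valid once $t$ passes a threshold $t_0$.

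For $\tilde\Phi(t)\geq t$, I would use only the abstract definition of a Young function. Convexity together with $\tilde\Phi(0)=0$ makes $t\mapsto\tilde\Phi(t)/t$ nondecreasing, and the built-in condition $\tilde\Phi(t)/t\to\infty$ at infinity then guarantees $\tilde\Phi(t)/t\geq 1$ for all $t$ beyond some threshold.

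For $\varphi_p(t)\gtrsim\tilde\Phi(t)$, the first substantive step is to upgrade the averaged hypothesis $\eta\in B_{p'}$, i.e.\ $\int_c^\infty \eta(s)\,s^{-p'-1}\,ds<\infty$, into the pointwise bound $\eta(t)\lesssim t^{p'}$ for large $t$. This is not immediate, but convexity provides the inequality $\eta(as)\geq a\,\eta(s)$ for $a\geq 1$: if $\eta(t_k)\geq k\,t_k^{p'}$ along some $t_k\to\infty$, then on each block $[t_k,2t_k]$ the integrand $\eta(s)/s^{p'+1}$ contributes a quantity bounded below by a constant multiple of $k$, and passing to a sparse subsequence one contradicts the $B_{p'}$ integrability. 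Inverting this bound gives $\eta^{-1}(z)\gtrsim z^{1/p'}$ for large $z$.

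Feeding this into the hypothesis $\eta^{-1}(z)\varphi^{-1}(z)\lesssim\tilde\Phi^{-1}(z)$ yields $z^{1/p'}\varphi^{-1}(z)\lesssim\tilde\Phi^{-1}(z)$. Combining with $\tilde\Phi^{-1}(z)\leq z$ (the inverse form of the first half of the remark) gives $\varphi^{-1}(z)\lesssim z^{1/p}$, equivalently $z^{1/p'}\gtrsim(\varphi^{-1}(z))^{p/p'}$. Plugging this back into the same chain,
\[\tilde\Phi^{-1}(z)\gtrsim z^{1/p'}\varphi^{-1}(z)\gtrsim(\varphi^{-1}(z))^{p/p'+1}=(\varphi^{-1}(z))^p=\varphi_p^{-1}(z),\]
using $p/p'+1=p$ and the definition $\varphi_p(w)=\varphi(w^{1/p})$. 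A standard convexity-based passage from an inequality between the inverses of two Young functions to the reversed inequality between the functions themselves then upgrades $\tilde\Phi^{-1}\gtrsim\varphi_p^{-1}$ to $\varphi_p(t)\gtrsim\tilde\Phi(t)$ for $t$ large.

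The only step that is not a short algebraic manipulation is the conversion of the averaged membership $\eta\in B_{p'}$ into a pointwise bound, for which I would rely on the convexity blocking argument sketched above; the rest is a chain of inversions and rescalings using $1/p+1/p'=1$ and the elementary fact that Young functions eventually exceed the identity.
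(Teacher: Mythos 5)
Your argument follows the same chain as the paper's: derive $\eta(t)\lesssim t^{p'}$ from $\eta\in B_{p'}$, combine with the hypothesis $\eta^{-1}\varphi^{-1}\lesssim\tilde\Phi^{-1}$ and $\tilde\Phi^{-1}(z)\leq z$ to get $\varphi^{-1}(z)\lesssim z^{1/p}$, then feed this back in to obtain $\tilde\Phi^{-1}\gtrsim(\varphi^{-1})^{p}=\varphi_p^{-1}$; the only difference is that you spell out the two facts the paper simply asserts (the convexity/threshold argument for $\tilde\Phi(t)\geq t$ and the dyadic-blocking contradiction for $\eta(t)\lesssim t^{p'}$). One small inaccuracy: the paper's definition of a Young function does not actually include $\tilde\Phi(t)/t\to\infty$, so your appeal to this as a ``built-in condition'' is not quite right; what you need is simply that $\tilde\Phi(t)/t$ is nondecreasing with $\tilde\Phi(1)>0$ (giving $\tilde\Phi(t)\gtrsim t$), or, in the setting of Theorem~\ref{teo: F-S para T Hormander}, that the lower-type-$s$ hypothesis with $s>1$ forces $\tilde\Phi(t)/t\to\infty$.
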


The article is organized as follows: in Section~\ref{seccion: preliminares} we give the preliminaries and definitions. Section~\ref{seccion: resultados auxiliares} contains some technical results
that will be useful for the proof of the main theorems  in Section~\ref{seccion: pruebas}.

	\section{Preliminaries and basic definitions}\label{seccion: preliminares}
By a weight $w$ we understand a locally integrable function such that $0<w(x)<\infty$ for almost every $x$. Given $1<p<\infty$, the Muckenhoupt $A_p$ class is defined as the collection of weights $w$ such that the inequality
\[\left(\frac{1}{|Q|}\int_Q w\right)\left(\frac{1}{|Q|}\int_Q w^{1-p'}\right)^{p-1}\leq C\]
holds for some positive constant $C$ and every cube $Q$ in $\mathbb{R}^n$ with sides parallel to the coordinate axes. When necessary, we shall denote by $x_Q$ and $\ell(Q)$ the center and the side-length of the cube $Q$, respectively. 

We say that $w$ belongs to $A_1$ if there exists a positive constant $C$ such that the inequality
\[\frac{1}{|Q|}\int_Q w\leq Cw(x)\]
holds for every cube $Q$ and almost every $x\in Q$.
Finally, for $p=\infty$ the $A_\infty$ class is understood as the collection of all $A_p$ classes, that is, $A_\infty=\bigcup_{p\geq 1}A_p$. 

Given $1\leq p<\infty$, the smallest constant for which the corresponding inequality above holds is denoted by~$[w]_{A_p}$. It is well-known that $A_p$ classes are increasing in $p$, that is, $A_p\subset A_q$ for $p<q$  and that every $w\in A_p$ is doubling, that is, there exists a constant $C>1$ such that
$w(2Q)\leq Cw(Q)$, for every cube $Q$.

For further properties and details about Muckenhoupt classes see, for example, \cite{javi} or \cite{grafakos}.

\medskip

An important property of Muckenhoupt weights is that they satisfy a reverse Hölder condition. Given a real number $s>1$, we say that $w\in \mathrm{RH}_s$ if the inequality
\[\left(\frac{1}{|Q|}\int_Q w^s\right)^{1/s}\leq \frac{C}{|Q|}\int_Q w,\]
holds for some positive constant $C$ and every cube $Q$. The $\rm{RH}_\infty$ class is defined as the set of weights that verify
\[\sup_Q w\leq \frac{C}{|Q|}\int_Q w,\]
for some $C>0$ and every cube $Q$. Given $1<s\leq \infty$, the smallest constant for which the corresponding inequality above holds is denoted by $[w]_{\mathrm{RH}_s}$. It is well-known that reverse Hölder classes are decreasing on $s$, that is, ${\rm{RH}_\infty}\subset\mathrm{RH}_s\subset \mathrm{RH}_t$ for every $1<t<s$. 

The next lemma establishes some useful properties of $\rm{RH}_\infty$ weights that we shall use later. A proof can be found in \cite{Cruz-Uribe-Neugebauer}.

\begin{lema}\label{lema: potencia negativa de RHinf en A1 y positivas en RHinf}
	Let $w$ be a weight.
	\begin{enumerate}[\rm (a)]
		\item \label{item a - lema: potencia negativa de RHinf en A1 y positivas en RHinf}If $w\in\mathrm{RH}_\infty\cap A_p$, then $w^{1-p'}\in A_1$;
		\item \label{item b - lema: potencia negativa de RHinf en A1 y positivas en RHinf}if $w\in \mathrm{RH}_\infty$, then $w^r\in \mathrm{RH}_\infty$ for every $r>0$;
		\item \label{item c - lema: potencia negativa de RHinf en A1 y positivas en RHinf}if $w\in A_1$, then $w^{-1}\in \rm{RH}_\infty$.
	\end{enumerate}
\end{lema}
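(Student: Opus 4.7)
The plan is to prove each item by chaining the defining inequalities with Jensen's inequality. Parts (a) and (c) reduce to short computations, while part (b) requires separate arguments for $r\ge 1$ and $0<r<1$; the latter is the main obstacle.

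For (a), I would rewrite the $A_p$ condition in one-sided form: solving
\[\Big(\frac{1}{|Q|}\int_Q w\Big)\Big(\frac{1}{|Q|}\int_Q w^{1-p'}\Big)^{p-1}\le [w]_{A_p}\]
for the second factor gives $\tfrac{1}{|Q|}\int_Q w^{1-p'}\le [w]_{A_p}^{p'-1}\bigl(\tfrac{1}{|Q|}\int_Q w\bigr)^{-(p'-1)}$. Since $w\in\mathrm{RH}_\infty$ yields $\mathrm{ess\,sup}_Q w\le [w]_{\mathrm{RH}_\infty}\tfrac{1}{|Q|}\int_Q w$, raising to the positive power $p'-1$ (positive since $p>1$) and inverting gives $\bigl(\tfrac{1}{|Q|}\int_Q w\bigr)^{-(p'-1)}\le [w]_{\mathrm{RH}_\infty}^{p'-1}(\mathrm{ess\,sup}_Q w)^{-(p'-1)}=[w]_{\mathrm{RH}_\infty}^{p'-1}\,\mathrm{ess\,inf}_Q w^{1-p'}$. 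Combining these two bounds gives the $A_1$ inequality for $w^{1-p'}$.

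Part (c) is equally short: the $A_1$ condition reads $\mathrm{ess\,sup}_Q w^{-1}=(\mathrm{ess\,inf}_Q w)^{-1}\le [w]_{A_1}\bigl(\tfrac{1}{|Q|}\int_Q w\bigr)^{-1}$, and Jensen's inequality applied to the convex function $t\mapsto 1/t$ gives $\bigl(\tfrac{1}{|Q|}\int_Q w\bigr)^{-1}\le \tfrac{1}{|Q|}\int_Q w^{-1}$. Chaining the two yields the $\mathrm{RH}_\infty$ bound for $w^{-1}$.

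For (b) with $r\ge 1$, Jensen applied to the convex map $t\mapsto t^r$ finishes the job: $\mathrm{ess\,sup}_Q w^r=(\mathrm{ess\,sup}_Q w)^r\le [w]_{\mathrm{RH}_\infty}^r\bigl(\tfrac{1}{|Q|}\int_Q w\bigr)^r\le [w]_{\mathrm{RH}_\infty}^r\tfrac{1}{|Q|}\int_Q w^r$. The case $0<r<1$ is more delicate because concave Jensen pushes in the wrong direction; here I would run a level-set argument. Set $C:=[w]_{\mathrm{RH}_\infty}$, $\alpha:=1/(2C)$ and $E_\alpha:=\{x\in Q:w(x)>\alpha\,\mathrm{ess\,sup}_Q w\}$. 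The bound $\int_Q w\le \mathrm{ess\,sup}_Q w\,|E_\alpha|+\alpha\,\mathrm{ess\,sup}_Q w\,|Q|$ combined with the $\mathrm{RH}_\infty$ lower bound $\tfrac{1}{|Q|}\int_Q w\ge\mathrm{ess\,sup}_Q w/C$ forces $|E_\alpha|/|Q|\ge 1/C-\alpha=1/(2C)$. Then
\[\frac{1}{|Q|}\int_Q w^r\ge \frac{|E_\alpha|}{|Q|}(\alpha\,\mathrm{ess\,sup}_Q w)^r\ge (2C)^{-(r+1)}\,\mathrm{ess\,sup}_Q w^r,\]
which is exactly the $\mathrm{RH}_\infty$ condition for $w^r$.
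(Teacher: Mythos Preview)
Your proof is correct in all three parts. The paper itself does not prove this lemma; it simply cites Cruz-Uribe and Neugebauer, \emph{The structure of the reverse H\"older classes}, Trans.\ Amer.\ Math.\ Soc.\ 347 (1995), for a proof. Your argument is therefore self-contained where the paper is not, and the individual steps---the chaining of $A_p$ with $\mathrm{RH}_\infty$ in (a), Jensen for $t\mapsto 1/t$ in (c), and the level-set estimate for the concave range $0<r<1$ in (b)---are essentially the standard ones one finds in that reference. In particular, your handling of the $0<r<1$ case via the set $E_\alpha$ and the lower bound $|E_\alpha|/|Q|\ge 1/(2C)$ is exactly the right idea and is carried out cleanly.
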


\medskip

We say that $\varphi:[0,\infty)\to[0,\infty)$ is a Young function if it is convex, strictly increasing and also satisfies $\varphi(0)=0$ and $\lim_{t\to\infty}\varphi(t)=\infty$. The generalized inverse $\varphi^{-1}$ of $\varphi$ is defined by 
\[\varphi^{-1}(t)=\inf\{s\geq 0: \varphi(s)\geq t\},\]
where we understand $\inf\emptyset =\infty$. When $\varphi$ is a Young function that verifies $0<\varphi(t)<\infty$ for every $t>0$ it can be seen that $\varphi$ is invertible and the generalized inverse of $\varphi$ is its actual inverse function. Throughout this paper we shall deal with this type of Young functions.

The complementary function of the Young function $\varphi$ is denoted by $\tilde\varphi$ and defined for $t\geq 0$ by
\[\tilde\varphi (t)=\sup\{ts-\varphi(s):s\geq 0\}.\]
It is well-known that $\tilde\varphi$ is also a Young function and further
\begin{equation}\label{eq: preliminares - producto de inversas como t}
\varphi^{-1}(t)\tilde\varphi^{-1}(t)\approx t.
\end{equation}
Given a Young function $\varphi$ and a Muckenhoupt weight $w$, the generalized maximal operator $M_{\varphi, w}$ is defined, for $f$ such that $\varphi(f)\in L^1_{\rm loc}$, by
\[M_{\varphi, w}f(x)=\sup_{Q\ni x}\|f\|_{\varphi,Q,w},\]
where $\|f\|_{\varphi,Q,w}$ is an average of Luxemburg type given by the expression
\[\|f\|_{\varphi,Q,w}=\inf\left\{\lambda>0: \frac{1}{w(Q)}\int_Q \varphi\left(\frac{|f(y)|}{\lambda}\right)w(y)\,dy\leq 1\right\},\]
and this infimum is actually a minimum, since it is easy to see that
\[\frac{1}{w(Q)}\int_Q \varphi\left(\frac{|f(y)|}{\|f\|_{\varphi,Q,w}}\right)w(y)\,dy\leq 1.\]
When $w=1$ we simply write $\|f\|_{\varphi,Q}$ and $M_{\varphi,w}=M_{\varphi}$. When $\varphi(t)=t$, the operator $M_{\varphi,w}$ is just the classical Hardy-Littlewood maximal function with respect to the measure $d\mu(x)=w(x)\,dx$.

 Notice that when $\varphi$ is a Young function which has both a lower type $r_1$ and an upper type $r_2$ with $1<r_1<r_2$, we have $M_{r_1}\lesssim M_\varphi\lesssim M_{r_2}$. 

If $\Phi,\Psi$ and $\varphi$ are Young functions satisfying 
\[\Phi^{-1}(t)\Psi^{-1}(t)\lesssim \varphi^{-1}(t)\]
for $t\geq t_0\geq 0$, then 
\begin{equation}\label{eq: preliminares - relacion entre inversas para Holder}
\varphi(st)\lesssim \Phi(s)+\Psi(t),
\end{equation}
for every $s,t\geq 0$. As a consequence of this estimate we obtain the generalized Hölder inequality
\[\|fg\|_{\varphi,E,w}\lesssim \|f\|_{\Phi,E,w}\|g\|_{\Psi,E,w},\]
for every doubling weight $w$ and every measurable set $E$ such that $|E|<\infty$. Particularly, in views of \eqref{eq: preliminares - producto de inversas como t} we get that
\begin{equation}\label{eq: preliminares - Holder generalizada}
\frac{1}{w(E)}\int_E |fg|w\lesssim \|f\|_{\varphi,E,w}\|g\|_{\tilde\varphi,E,w}.
\end{equation}

We say that a Young function $\varphi$ belongs to $B_p$, $p>1$, if there exists a positive constant $c$ such that
\[\int_c^\infty \frac{\varphi(t)}{t^p}\,\frac{dt}{t}<\infty.\]
These classes were introduced in \cite{Perez-95-Onsuf} and play a fundamental role in the Fefferman-Stein estimates for CZO.

\section{Auxiliary results}\label{seccion: resultados auxiliares}

We devote this section to state and prove some results that will be useful in the  proof of our main result.

The theorem below establishes a strong type Fefferman-Stein estimate for CZO. 

\begin{teo}[\cite{P94}]\label{teo: tipo fuerte de T con peso arbitrario}
	Let $T$ be a CZO, $1<p<\infty$ and $\varphi$ a Young function that verifies $\varphi\in B_p$. Then there exists a positive constant $C$ such that for every weight $w$ we have that
	\[\int_{\mathbb{R}^n}|Tf(x)|^pw(x)\,dx\leq C\int_{\mathbb{R}^n}|f(x)|^pM_\varphi w(x)\,dx.\]
\end{teo}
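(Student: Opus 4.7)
The strategy is the sharp maximal function method combined with a Rubio de Francia iteration exploiting the hypothesis $\varphi\in B_p$, which is equivalent to $M_\varphi$ being bounded on $L^p(\R^n)$ (see \cite{Perez-95-Onsuf}).

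I would begin with the pointwise sharp-function domination of Alvarez--P\'erez: for any CZO $T$ and any $0<\delta<1$,
\[M^{\#}_\delta(Tf)(x):=\bigl(M^{\#}(|Tf|^\delta)(x)\bigr)^{1/\delta}\lesssim Mf(x).\]
Pairing this with the classical Fefferman--Stein estimate $\|g\|_{L^p(u)}\lesssim \|M^{\#}_\delta g\|_{L^p(u)}$, valid for every $u\in A_\infty$ and every $0<p<\infty$, gives
\[\int_{\R^n}|Tf|^p u\,dx \le C\int_{\R^n}(Mf)^p u\,dx, \qquad u\in A_\infty.\]
Since $Mw\in A_1\subset A_\infty$ for any weight $w$, taking $u=Mw$ and using $w\le Mw$ reduces the task to proving
\[\int_{\R^n}(Mf)^p Mw\,dx \le C\int_{\R^n}|f|^p M_\varphi w\,dx,\]
i.e., a one-step Fefferman--Stein-type estimate for $M$ that improves the trivial $M^2 w$ bound on the right-hand side to a single $M_\varphi w$.

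For this refinement I would run a Rubio de Francia iteration. Setting $N=\|M_\varphi\|_{L^p\to L^p}<\infty$, the operator
\[\mathcal{R}h=\sum_{k\ge 0}\frac{M_\varphi^k h}{(2N)^k}\]
maps $L^p(\R^n)$ into itself with $h\le \mathcal{R}h$, $\|\mathcal{R}h\|_{L^p}\le 2\|h\|_{L^p}$, and $M\mathcal{R}h\le M_\varphi\mathcal{R}h\le 2N\,\mathcal{R}h$; in particular $\mathcal{R}h\in A_1$. Dualizing the $L^p(M_\varphi w\,dx)$ norm of $f$ against a test function $g$ in its dual, one replaces $g$ by $\mathcal{R}g$ to obtain an $A_1$ majorant, and the two-weight boundedness of $M$ on $A_1$ weights then produces the desired bound with exactly one application of $M_\varphi$ ending up on $w$.

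The main obstacle will be the bookkeeping in this last step. The Rubio de Francia algorithm naturally manufactures an $A_1$ majorant of an $L^p$ function, but to arrive at $M_\varphi w$ on the right---rather than an iteration $M_\varphi^k w$ or the classical $M^2 w$---one must carefully pair the iteration with the duality so that the $M_\varphi^k$ factors collapse and exactly one $M_\varphi$ transfers onto $w$. It is precisely here that the $B_p$ hypothesis is sharply used: it is needed both for the convergence of the series defining $\mathcal{R}$ and for the sharp transfer of a single $M_\varphi$ onto the weight side.
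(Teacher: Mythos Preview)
The paper does not prove this theorem; it is merely quoted from \cite{P94} as an auxiliary tool, so there is no in-paper argument to compare against. I can only comment on the soundness of your outline.

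Your reduction contains a genuine error at the step ``since $Mw\in A_1\subset A_\infty$ for any weight $w$''. This is false. The Coifman--Rochberg theorem gives $(Mw)^\delta\in A_1$ only for $0<\delta<1$; the case $\delta=1$ fails. Take $w=\mathcal{X}_{B(0,1)}$: then $Mw(x)\approx\min\{1,|x|^{-n}\}$, and for $Q=B(0,R)$ one has
\[
\frac{1}{|Q|}\int_Q Mw\approx R^{-n}\log R,\qquad \inf_Q Mw\approx R^{-n},
\]
so $[Mw]_{A_1}=\infty$; in fact $Mw\notin A_\infty$. Consequently the Fefferman--Stein sharp-function inequality $\|g\|_{L^p(u)}\lesssim\|M^{\#}_\delta g\|_{L^p(u)}$ with $u=Mw$ is unavailable, and your chain collapses before the Rubio de Francia step is reached.

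As for that last step, the difficulty you flag as ``bookkeeping'' is more than cosmetic. Your operator $\mathcal{R}$ is built from $M_\varphi$, which the hypothesis $\varphi\in B_p$ makes bounded on $L^p$, whereas any dual pairing against $f\in L^p$ produces test functions living in $L^{p'}$; the algorithm as written therefore acts on the wrong exponent, and it is not clear how a single $M_\varphi$ is supposed to land on $w$. P\'erez's argument in \cite{P94} does not pass through the intermediate weight $Mw$. He first proves the maximal-operator version $\int(Mf)^p w\le C\int|f|^p M_\varphi w$ directly, by a level-set argument that exploits the $B_p$ integral condition, and then transfers it to $T$ by combining Coifman's inequality with a duality/Rubio de Francia construction tuned so that the auxiliary $A_1$ weight is manufactured on the correct side and can legitimately be fed into Coifman's estimate.
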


The following result states a Coifman type estimate for operators associated to kernels with less regularity.

\begin{teo}[\cite{Lorente-Riveros-delaTorre05}]\label{teo: tipo Coifman T Hormander}
	Let $\Phi$ be a Young function and $T$ as in \eqref{eq: representacion integral de T}, with kernel $K\in H_\Phi$. Then for every $0<p<\infty$ and $w\in A_\infty$ the inequality
	\[\int_{\mathbb{R}^n}|Tf(x)|^pw(x)\,dx\leq C\int_{\mathbb{R}^n}\left(M_{\tilde\Phi}f(x)\right)^pw(x)\,dx\]
	holds for every $f$ such that the left hand-side is finite.
\end{teo}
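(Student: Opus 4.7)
The plan is to prove Theorem~\ref{teo: tipo Coifman T Hormander} via the Fefferman-Stein sharp maximal function technique. Since $w\in A_\infty$ and the left-hand side is assumed finite, one has the classical estimate
\[
\int_{\mathbb{R}^n}|Tf(x)|^p w(x)\,dx \leq C\int_{\mathbb{R}^n}M^{\#,\delta}(Tf)(x)^p w(x)\,dx
\]
for every $0<p<\infty$ and every $0<\delta<1$, where $M^{\#,\delta}g=M^\#(|g|^\delta)^{1/\delta}$. The problem is thereby reduced to the pointwise inequality
\[
M^{\#,\delta}(Tf)(x)\lesssim M_{\tilde\Phi}f(x)
\]
for a small enough $\delta\in(0,1)$.

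To prove this pointwise estimate, I would fix a cube $Q\ni x$ with center $x_Q$, write $f=f_1+f_2$ with $f_1=f\mathcal{X}_{2Q}$, and pick the constant $c_Q=Tf_2(x_Q)$. Then
\[
\left(\frac{1}{|Q|}\int_Q|Tf-c_Q|^\delta\right)^{1/\delta}\lesssim \left(\frac{1}{|Q|}\int_Q|Tf_1|^\delta\right)^{1/\delta}+\sup_{y\in Q}|Tf_2(y)-Tf_2(x_Q)|.
\]
For the first summand, Kolmogorov's inequality combined with the weak $(1,1)$-boundedness of $T$ (which holds under the $H_\Phi$ regularity via the standard Calder\'on-Zygmund decomposition, since $H_\Phi$ embeds into the usual $L^1$-H\"ormander class) gives a bound by $C|2Q|^{-1}\int_{2Q}|f|\leq CMf(x)\lesssim M_{\tilde\Phi}f(x)$, the last inequality following from $\tilde\Phi(t)\gtrsim t$.

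For the second summand, I would decompose $(2Q)^c$ into dyadic annuli $|z-x_Q|\sim 2^k\ell(Q)$, $k\geq 1$, and apply the generalized H\"older inequality \eqref{eq: preliminares - Holder generalizada} with the pair $(\Phi,\tilde\Phi)$ on each annulus, obtaining for each summand
\[
(2^k\ell(Q))^n\,\|K(y-\cdot)-K(x_Q-\cdot)\|_{\Phi,|z-x_Q|\sim 2^k\ell(Q)}\,\|f\|_{\tilde\Phi,2^{k+1}Q}.
\]
Since $\|f\|_{\tilde\Phi,2^{k+1}Q}\leq M_{\tilde\Phi}f(x)$, summing in $k$ and invoking the $L^\Phi$-H\"ormander condition \eqref{eq: condicion Hormander} yields the desired pointwise bound uniformly in $y\in Q$.

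The main obstacle I anticipate is the uniform control of the constants in the Kolmogorov and annular estimates in both $Q$ and $y\in Q$, together with a clean justification of the weak $(1,1)$ endpoint for $T$ from the sole assumption $K\in H_\Phi$. A subtler but essential technicality is the required a priori finiteness of the left-hand side in the Fefferman-Stein comparison $\|g\|_{L^p(w)}\lesssim \|M^{\#,\delta}g\|_{L^p(w)}$, which is precisely the hypothesis placed on $f$ in the statement of the theorem.
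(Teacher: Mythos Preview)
The paper does not supply a proof of this theorem; it is quoted from \cite{Lorente-Riveros-delaTorre05} as an auxiliary result and used as a black box in the proof of Theorem~\ref{teo: F-S para T Hormander}. Your proposal is exactly the argument given in that reference: reduce via the Fefferman--Stein sharp-function inequality for $A_\infty$ weights to the pointwise bound $M^{\#,\delta}(Tf)\lesssim M_{\tilde\Phi}f$, and prove the latter by the standard local/far splitting, Kolmogorov plus weak $(1,1)$ (available since $H_\Phi\subset H_1$) for the local piece, and generalized H\"older together with the $L^\Phi$-H\"ormander condition on the dyadic annuli for the far piece. The two technicalities you single out---the weak $(1,1)$ endpoint and the a priori finiteness hypothesis needed for the Fefferman--Stein comparison---are precisely the ones addressed in \cite{Lorente-Riveros-delaTorre05}, so the proposal is correct and matches the original source.
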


The following lemma will be an important tool in order to obtain the main result. 

\begin{lema}\label{lema: comparacion peso por caracteristica}
	Let $\varphi$ be a Young function, $w$ a doubling weight, $f$ such that $M_{\varphi,w} f (x)<\infty$ almost everywhere and $Q$ be a fixed cube. Then
	\[M_{\varphi,w}(f\mathcal{X}_{\mathbb{R}^n\backslash RQ})(x)\approx M_{\varphi,w}(f\mathcal{X}_{\mathbb{R}^n\backslash RQ})(y)\]
	for every $x,y\in Q$, where $R=4\sqrt{n}$. 
\end{lema}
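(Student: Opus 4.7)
The plan is to analyze the supremum defining $M_{\varphi,w}(f\mathcal{X}_{\mathbb{R}^n\setminus RQ})(x)$ cube-by-cube. The cubes $Q'\ni x$ with $Q'\subset RQ$ contribute nothing because $f\mathcal{X}_{\mathbb{R}^n\setminus RQ}\equiv 0$ on $Q'$, so only cubes $Q'\ni x$ meeting $\mathbb{R}^n\setminus RQ$ are relevant. For such a relevant cube, I would pick any $z\in Q'\cap(\mathbb{R}^n\setminus RQ)$ and use the $\ell^\infty$ triangle inequality through the center $x_Q$: since $|x-x_Q|_\infty\leq \ell(Q)/2$ and $|z-x_Q|_\infty>R\ell(Q)/2$, one obtains $|z-x|_\infty>(R-1)\ell(Q)/2$. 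Because $x,z\in Q'$ forces $|z-x|_\infty\leq \ell(Q')$, the choice $R=4\sqrt{n}$ gives $\ell(Q')\geq (2\sqrt{n}-1/2)\ell(Q)\geq \ell(Q)$ for every $n\geq 1$; calibrating the constant so that $\ell(Q')\geq \ell(Q)$ is the whole geometric point of fixing $R=4\sqrt{n}$.

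With this lower bound on $\ell(Q')$ in hand, I would introduce $\tilde Q:=3Q'$, the cube sharing center with $Q'$ but of side length $3\ell(Q')$. Obviously $Q'\subset \tilde Q$, and for any other $y\in Q$ the triangle inequality gives
\[|y-x_{Q'}|_\infty\leq |y-x|_\infty+|x-x_{Q'}|_\infty\leq \ell(Q)+\ell(Q')/2\leq 3\ell(Q')/2,\]
so $y\in \tilde Q$. Thus every relevant $Q'\ni x$ sits inside a cube $\tilde Q\ni y$ of comparable side length.

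The remaining step is to compare the Luxemburg averages over $Q'$ and $\tilde Q$. By doubling, $w(\tilde Q)\leq Cw(Q')$ with $C$ independent of $Q'$. Writing $g=f\mathcal{X}_{\mathbb{R}^n\setminus RQ}$ and $\lambda_0=\|g\|_{\varphi,\tilde Q,w}$, the inclusion $Q'\subset \tilde Q$ gives $\int_{Q'}\varphi(|g|/\lambda_0)w\leq w(\tilde Q)\leq C w(Q')$; combining this with the convexity estimate $\varphi(t/C)\leq \varphi(t)/C$ for $C\geq 1$ yields $\|g\|_{\varphi,Q',w}\leq C\lambda_0=C\|g\|_{\varphi,\tilde Q,w}\leq C\,M_{\varphi,w}g(y)$. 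Taking the supremum over the admissible cubes $Q'\ni x$ produces $M_{\varphi,w}g(x)\lesssim M_{\varphi,w}g(y)$, and the reverse inequality follows by symmetry since the construction treats $x$ and $y$ interchangeably.

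I expect the only mildly technical point to be pinning down the constant $R=4\sqrt{n}$ so that the geometric lower bound $\ell(Q')\geq \ell(Q)$ holds in all dimensions; once this is secured, the enlargement to $\tilde Q$ and the Luxemburg-norm comparison are standard applications of doubling and convexity.
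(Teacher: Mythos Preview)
Your proof is correct and follows essentially the same strategy as the paper: restrict attention to cubes $Q'\ni x$ that meet $\mathbb{R}^n\setminus RQ$, obtain a lower bound on $\ell(Q')$ comparable to $\ell(Q)$, enlarge $Q'$ to a concentric cube containing $y$, and compare Luxemburg norms via doubling and convexity. The only cosmetic difference is that the paper works with Euclidean balls to get $\ell(Q')\geq \tfrac34\ell(Q)$ and dilates by $R=4\sqrt{n}$, whereas your $\ell^\infty$ argument is a bit cleaner, yielding $\ell(Q')\geq \ell(Q)$ and allowing the smaller dilation $\tilde Q=3Q'$.
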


\begin{proof}
	Fix $x$ and $y$ in $Q$ and let $Q'$ be a cube containing $x$. We can assume that $Q'\cap \mathbb{R}^n\backslash RQ\neq\emptyset$, since $\|f\|_{\varphi,Q',w}=0$ otherwise. We shall prove that 
	\begin{equation}\label{eq: lema: comparacion peso por caracteristica - eq1}
	\ell(Q')\geq \frac34 \ell(Q).
	\end{equation} 
	Indeed, let $B_{Q'}=B(x_{Q'}, \ell(Q')/2)$ and $B_Q=B(x_Q,\ell(Q)\sqrt{n}/2)$. Observe that $Q'\cap \mathbb{R}^n\backslash RQ\neq\emptyset$ implies that $B_{Q'}\cap \mathbb{R}^n\backslash RQ\neq\emptyset$. If \eqref{eq: lema: comparacion peso por caracteristica - eq1} does not hold, for $z\in B_{Q'}$ we would have
	\begin{align*}
	|z-x_Q|&\leq |z-x|+|x-x_Q|\\
	&\leq \sqrt{n}\ell(Q')+\frac{\sqrt{n}}{2}\ell(Q)\\
		   &<\left(\frac{3\sqrt{n}}{4}+\frac{\sqrt{n}}{2}\right)\ell(Q) \\
		   &<\frac{R}{2}\ell(Q),
	\end{align*}
		which yields $B_{Q'}\subseteq B(x_Q, R\ell(Q)/2)\subseteq RQ$, a contradiction. Therefore \eqref{eq: lema: comparacion peso por caracteristica - eq1} holds. Then we have that $Q\subseteq RQ'$. Indeed, if $z\in Q$ we get
		\begin{align*}
	|z-x_{Q'}|&\leq |z-x|+|x-x_{Q'}|\\
	&\leq \sqrt{n}\ell(Q)+\frac{\sqrt{n}}{2}\ell(Q')\\
	&\leq \left(\frac{4\sqrt{n}}{3}+\frac{\sqrt{n}}{2}\right)\ell(Q')\\
	&<2\sqrt{n}\ell(Q')=\frac{R}{2}\ell(Q'),
	\end{align*}
	which implies that $Q\subseteq B(x_{Q'},\ell(RQ')/2)\subseteq RQ'$. Thus
\[\frac{1}{w(Q')}\int_{Q'}\varphi\left(\frac{|f|}{\|f\|_{\varphi, RQ',w}}\right)w\leq \frac{w(RQ')}{w(Q')}\frac{1}{w(RQ')}\int_{RQ'}\varphi\left(\frac{|f|}{\|f\|_{\varphi, RQ',w}}\right)w\\
\leq C,\]
	since $w$ is doubling. This yields $\|f\|_{\varphi,Q',w}\leq C\|f\|_{\varphi,RQ',w}\leq CM_{\varphi, w}f(y)$, for every $Q'$ containing~$x$, which finally implies that $M_{\varphi,w}f(x)\leq CM_{\varphi,w}f(y)$. The other inequality can be achieved analogously by interchanging the roles of $x$ and $y$.
\end{proof}

The following result gives a relation between $M_{\varphi,w}$ and the unweighted version $M_\varphi$, when $w$ belongs to the $A_1$ class.

\begin{lema}\label{lema: relacion maximal generalizada con peso A1}
	Let $w\in A_1$ and $\varphi$ be a Young function.
	\begin{enumerate}[\rm (a)]
		\item \label{item a - lema: relacion maximal generalizada con peso A1}  There exists a positive constant $C$ such that
		\[M_{\varphi}f(x)\leq C M_{\varphi,w}f(x),\]
		for every $f$ such that $M_{\varphi,w}f(x)<\infty$ a.e.;
		\item \label{item b - lema: relacion maximal generalizada con peso A1} if $w^r\in A_1$ for some $r>1$, then
		\[M_{\varphi,w}f(x)\leq C M_{\varphi,w^r}f(x),\]
		for every $f$ such that $M_{\varphi,w^r}f(x)<\infty$ a.e.
		\end{enumerate}
\end{lema}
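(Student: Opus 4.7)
The goal is to compare the average $\frac{1}{|Q|}\int_Q\varphi(|f|/\lambda)$ with the $w$-weighted average. Since $w\in A_1$, the defining inequality $\frac{1}{|Q|}\int_Q w\le Cw(x)$ for a.e.\ $x\in Q$ can be rewritten as $\frac{1}{|Q|}\le\frac{Cw(x)}{w(Q)}$ a.e.\ on $Q$. Multiplying by $\varphi(|f(x)|/\lambda)\ge 0$ and integrating gives
\[\frac{1}{|Q|}\int_Q\varphi\!\left(\frac{|f|}{\lambda}\right)dx\le\frac{C}{w(Q)}\int_Q\varphi\!\left(\frac{|f|}{\lambda}\right)w\,dx.\]
Taking $\lambda=\|f\|_{\varphi,Q,w}$ (which is finite since $M_{\varphi,w}f(x)<\infty$ a.e.) the right-hand side is at most $C$. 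Since $\varphi$ is convex with $\varphi(0)=0$, for $C\ge 1$ we have $\varphi(t/C)\le\varphi(t)/C$, so $\frac{1}{|Q|}\int_Q\varphi(|f|/(C\lambda))\,dx\le 1$. This yields $\|f\|_{\varphi,Q}\le C\|f\|_{\varphi,Q,w}$, and taking the supremum over cubes containing $x$ completes (a).

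\textbf{Plan for part (b).} The same philosophy works, but the constant to control is more delicate. From $w^r\in A_1$ one has $w^r(x)\ge C^{-1}(w^r)_Q$ a.e.\ on $Q$, hence
\[w^{1-r}(x)\le C^{(r-1)/r}\bigl((w^r)_Q\bigr)^{(1-r)/r}\quad\text{a.e.\ on }Q.\]
Writing $w=w^{1-r}\cdot w^r$ and integrating $g:=\varphi(|f|/\lambda)$ against $w$ on $Q$,
\[\int_Q g\,w\,dx\le C^{(r-1)/r}\bigl((w^r)_Q\bigr)^{(1-r)/r}\int_Q g\,w^r\,dx.\]
Dividing by $w(Q)$ and multiplying and dividing by $w^r(Q)$, the geometric constant simplifies to $C^{(r-1)/r}\,(w^r)_Q^{1/r}/w_Q$.

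\textbf{Key step: a reverse Hölder bound.} To close the argument I need $(w^r)_Q^{1/r}\lesssim w_Q$, i.e.\ $w\in\mathrm{RH}_r$. This follows directly from the hypothesis: integrating $(w^r)_Q^{1/r}\le C^{1/r}w(x)$ (which is the $A_1$ bound for $w^r$ taken to the $1/r$ power) over $Q$ gives exactly $(w^r)_Q^{1/r}\le C^{1/r}w_Q$. Consequently
\[\frac{1}{w(Q)}\int_Q\varphi\!\left(\frac{|f|}{\lambda}\right)w\,dx\le C'\,\frac{1}{w^r(Q)}\int_Q\varphi\!\left(\frac{|f|}{\lambda}\right)w^r\,dx.\]
Choosing $\lambda=\|f\|_{\varphi,Q,w^r}$ and invoking convexity of $\varphi$ as in part (a) produces $\|f\|_{\varphi,Q,w}\le C'\|f\|_{\varphi,Q,w^r}$; the supremum over cubes containing $x$ finishes (b).

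\textbf{Expected main obstacle.} The only nontrivial step is recognizing that the hypothesis $w^r\in A_1$ encodes more than just $w\in A_1$: it supplies the pointwise lower bound on $w^r$ needed to absorb the factor $w^{1-r}$, and it also yields $w\in\mathrm{RH}_r$, which is precisely what cancels the ratio $(w^r)_Q^{1/r}/w_Q$ appearing after normalization. Once this observation is in place, the remaining manipulations are routine applications of convexity and the definition of the Luxemburg norm.
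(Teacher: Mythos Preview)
Your proof is correct. Part~(a) is essentially identical to the paper's argument: both use the $A_1$ condition to pass from the Lebesgue average to the $w$-weighted average, then invoke convexity of $\varphi$ to absorb the constant into the Luxemburg norm.

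For part~(b) your route differs slightly from the paper's. The paper writes $w=w^r\cdot w^{1-r}$, pulls out $\sup_Q w^{1-r}$, and then controls the resulting constant by appealing to Lemma~\ref{lema: potencia negativa de RHinf en A1 y positivas en RHinf}: from $w\in A_1$ one gets $w^{-1}\in\mathrm{RH}_\infty$, hence $w^{1-r}\in\mathrm{RH}_\infty$, and the $\mathrm{RH}_\infty$ bound closes the estimate. You instead extract from $w^r\in A_1$ the pointwise lower bound on $w^r$ (equivalently the pointwise upper bound on $w^{1-r}$) \emph{and} the reverse H\"older inequality $((w^r)_Q)^{1/r}\le C^{1/r}w_Q$, which is exactly what is needed to cancel the normalization factor. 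Your argument is a bit more self-contained, as it avoids the structural lemma on $\mathrm{RH}_\infty$ and uses only the single hypothesis $w^r\in A_1$ (the standing assumption $w\in A_1$ is not invoked). The paper's version, on the other hand, makes the role of the $\mathrm{RH}_\infty$ machinery explicit, which ties in with how that lemma is used elsewhere in the article. Both approaches yield the same conclusion with comparable effort.
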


\begin{proof}
	For \eqref{item a - lema: potencia negativa de RHinf en A1 y positivas en RHinf}, fix $x$ and a cube $Q\ni x$.  Since $w\in A_1$ we have that 
	\begin{align*}
	\frac{1}{|Q|}\int_Q \varphi\left(\frac{|f|}{\lambda}\right)&=\frac{w(Q)}{|Q|}\frac{1}{w(Q)}\int_Q \varphi\left(\frac{|f|}{\lambda}\right)ww^{-1}\\
	&\leq \left(\sup_Q w^{-1}\right)[w]_{A_1}\left(\inf_Q w\right)\frac{1}{w(Q)}\int_Q \varphi\left(\frac{|f|}{\lambda}\right)w\\
	&\leq [w]_{A_1},
	\end{align*}
	if we take $\lambda=\|f\|_{\varphi,Q,w}$. Then we have $\|f\|_{\varphi,Q}\leq [w]_{A_1}\|f\|_{\varphi,Q,w}\leq [w]_{A_1} M_{\varphi,w}f(x)$, for every cube $Q$ that contains $x$. By taking supremum on these $Q$ we obtain the desired inequality.
	
	The proof of \eqref{item b - lema: relacion maximal generalizada con peso A1} follows similar lines. Indeed, by Lemma~\ref{lema: potencia negativa de RHinf en A1 y positivas en RHinf} we have that $w^{1-r}\in\rm{RH}_\infty$, so
	\begin{align*}
	\frac{1}{w(Q)}\int_Q \varphi\left(\frac{|f|}{\lambda}\right)w&=\frac{w^r(Q)}{w(Q)}\frac{1}{w^r(Q)}\int_Q \varphi\left(\frac{|f|}{\lambda}\right)w^rw^{1-r}\\
	&\leq \left(\sup_Q w^{1-r}\right)[w^r]_{A_1}\left(\inf_Q w^r\right)\frac{|Q|}{w(Q)}\frac{1}{w^r(Q)}\int_Q \varphi\left(\frac{|f|}{\lambda}\right)w^r\\
	&\leq \left[w^r\right]_{A_1}\left[w^{1-r}\right]_{\rm{RH}_\infty},
	\end{align*}
	provided we choose $\lambda=\|f\|_{\varphi,Q,w^r}$. \qedhere
\end{proof}

The next lemma gives a bound for functions of $L\log L$ type that we shall need in the main result. A proof can be found in \cite{Berra}.

\begin{lema}\label{lema: acotacion LlogL por potencia}
Let $\delta\geq 0$ and $\varphi(t)=t(1+\log^+t)^\delta$. For every $\varepsilon>0$ there exists a positive constant $C=C(\varepsilon,\delta)$ such that
\[\varphi(t)\leq Ct^{1+\varepsilon}, \quad\textrm{ for }\quad t\geq 1.\]
Moreover, the constant $C$ can be taken as $C=\max\left\{1, (\delta/\varepsilon)^\delta\right\}.$	
\end{lema}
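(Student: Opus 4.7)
The plan is to reduce the inequality to a one-variable extremum problem. Since $t\geq 1$ implies $\log^+ t=\log t$, dividing by $t$ the desired bound becomes
\[
(1+\log t)^\delta\leq C\,t^{\varepsilon},\qquad t\geq 1,
\]
so it suffices to bound the function $g(t):=(1+\log t)^\delta t^{-\varepsilon}$ on $[1,\infty)$ by the claimed constant. The case $\delta=0$ is immediate since $g(t)=t^{-\varepsilon}\leq 1$, so in what follows we assume $\delta>0$.

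Next I would differentiate: a direct computation gives
\[
g'(t)=\frac{(1+\log t)^{\delta-1}}{t^{\varepsilon+1}}\bigl(\delta-\varepsilon(1+\log t)\bigr),
\]
so $g'(t)=0$ exactly when $1+\log t=\delta/\varepsilon$, i.e.\ at $t^{\ast}:=e^{\delta/\varepsilon-1}$. Note that $g'$ is positive to the left of $t^{\ast}$ and negative to the right, so $t^{\ast}$ is the unique maximum of $g$ on $(0,\infty)$.

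Now I would split into two cases according to whether $t^{\ast}\geq 1$ or not. If $\delta\geq\varepsilon$, then $t^{\ast}\geq 1$, and plugging $t^{\ast}$ into $g$ yields
\[
g(t^{\ast})=\bigl(\delta/\varepsilon\bigr)^{\delta}\, e^{-\varepsilon(\delta/\varepsilon-1)}=\bigl(\delta/\varepsilon\bigr)^{\delta}e^{\varepsilon-\delta}\leq\bigl(\delta/\varepsilon\bigr)^{\delta},
\]
because $\varepsilon\leq\delta$. If instead $\delta<\varepsilon$, then $t^{\ast}<1$ and $g$ is already decreasing on $[1,\infty)$, so $g(t)\leq g(1)=1$. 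In both cases $g(t)\leq \max\{1,(\delta/\varepsilon)^{\delta}\}$, which is exactly the stated constant.

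There is no real obstacle here; this is essentially a routine calculus exercise. The only subtlety worth double-checking is the threshold $t^{\ast}=1$, which corresponds to $\delta=\varepsilon$, and making sure the two cases overlap consistently (at $\delta=\varepsilon$ both bounds give $1$). The expression of $C$ as $\max\{1,(\delta/\varepsilon)^{\delta}\}$ is slightly larger than the sharp maximum $(\delta/\varepsilon)^{\delta}e^{\varepsilon-\delta}$, but this cleaner form is all that is needed in the applications in Section~\ref{seccion: pruebas}.
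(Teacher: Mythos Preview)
Your argument is correct. The paper does not actually prove this lemma; it simply cites \cite{Berra} for the proof, so there is no in-paper argument to compare against. Your direct calculus approach---reducing to maximizing $g(t)=(1+\log t)^\delta t^{-\varepsilon}$ on $[1,\infty)$, locating the critical point $t^\ast=e^{\delta/\varepsilon-1}$, and splitting according to whether $t^\ast\geq 1$---is a clean, self-contained derivation that yields exactly the stated constant $C=\max\{1,(\delta/\varepsilon)^\delta\}$.
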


\section{Proof of the main results}\label{seccion: pruebas}

\begin{proof}[Proof of Theorem~\ref{teo: acotacion mixta general para T}]
	Let us first assume that $u$ is bounded. We fix $t>0$ and perform the Calder\'on-Zygmund decomposition of $f$ at level $t$ with respect to the measure $d\mu(x)=v(x)\,dx$. Let us observe that  $v\in \mathrm{RH}_\infty$, so that  $v\in A_\infty$ and therefore $\mu$ is a doubling measure. We obtain a collection of disjoint dyadic cubes $\{Q_j\}_{j=1}^\infty$ satisfying $t<f_{Q_j}^v\leq Ct$, where $f_{Q_j}^v$ is given by
	\[\frac{1}{v(Q_j)}\int_{Q_j}f(y)v(y)\,dy.\]
	If we write $\Omega=\bigcup_{j=1}^{\infty}Q_j$, then we have that $f(x)\leq t$ for almost every $x\in{\mathbb{R}}^n\backslash\Omega$. We also decompose $f$ as $f=g+h$, where
	\begin{equation*}
	g(x)=\left\{
	\begin{array}{ccl}
	f(x),& \textrm{ if } &x\in\mathbb{R}^n\backslash\Omega;\\
	f_{Q_j}^v,&\textrm{ if }& x\in Q_j,
	\end{array}
	\right.
	\end{equation*}
	and $h(x)=\sum_{j=0}^{\infty}{h_j(x)}$, with
	\begin{equation*}
	h_j(x)=\left(f(x)-f_{Q_j}^v\right)\mathcal{X}_{Q_j}(x).
	\end{equation*}
	It follows that $g(x)\leq Ct$ almost everywhere, every
	$h_j$ is supported on $Q_j$ and
	\begin{equation}\label{eq: hj integra cero contra v}
	\int_{Q_j}h_j(y)v(y)\,dy=0.
	\end{equation}
	Let $Q_j^*=RQ_j$, where $R=4\sqrt{n}$ as in Lemma~\ref{lema: comparacion peso por caracteristica}  and $\Omega^*=\bigcup_j Q_j^*$. We proceed as follows
	\begin{align*}
	uv\left(\left\{x\in \mathbb{R}^n: \left|\frac{T(fv)}{v}\right|>t\right\}\right)&\leq uv\left(\left\{x\in \mathbb{R}^n\backslash \Omega^*: \left|\frac{T(gv)}{v}\right|>\frac{t}{2}\right\}\right) + uv(\Omega^*)\\
	&+uv\left(\left\{x\in \mathbb{R}^n\backslash \Omega^*: \left|\frac{T(hv)}{v}\right|>\frac{t}{2}\right\}\right)\\
	&= I+II+III.
	\end{align*}
	We shall estimate each term separately. For $I$, let us fix $p>\max\{q,1+1/\delta\}$. Then we have that $p'<1+\delta$ and  
	\begin{align*}
	\int_e^\infty\left(\frac{t}{\varphi(t)}\right)^{p-1}\,\frac{dt}{t}&=\int_e^\infty\left(\frac{1}{\log t}\right)^{\delta(p-1)}\,\frac{dt}{t}\\
	&=\int_1^\infty y^{(1-p)\delta}\,dy\\
	&<\infty,
	\end{align*}
	since $\delta(p-1)>1$ by the choice of $p$. If we set $u^*=u\mathcal{X}_{\mathbb{R}^n\backslash \Omega^*}$, by applying Tchebychev inequality and Theorem~\ref{teo: tipo fuerte de T con peso arbitrario} we obtain 
	\begin{align*}
	I&\leq\frac{C}{t^{p'}}\int_{\mathbb{R}^n} |T(gv)|^{p'}uv^{1-p'}\mathcal{X}_{\mathbb{R}^n\backslash \Omega^*}\\
	&=\frac{C}{t^{p'}}\int_{\mathbb{R}^n} |T(gv)|^{p'}u^*v^{1-p'}\\
	&\leq\frac{C}{t^{p'}}\int_{\mathbb{R}^n} |gv|^{p'}M_\varphi\left(u^*v^{1-p'}\right).
	\end{align*}
	
	Let us estimate $M_\varphi\left(u^*v^{1-p'}\right)$. Recall that we have $v\in \mathrm{RH}_\infty\cap A_p$ since $p>q$, so by item~\eqref{item a - lema: potencia negativa de RHinf en A1 y positivas en RHinf} of Lemma~\ref{lema: potencia negativa de RHinf en A1 y positivas en RHinf} we get $v^{1-p'}\in A_1$. We shall prove that there exists a positive constant $C$ verifying 
	\begin{equation}\label{eq: teo acotacion mixta general para T - relacion maximales con y sin peso}
	M_\varphi\left(u^*v^{1-p'}\right)(x)\leq CM_{\varphi,v^{1-q'}}(u^*)(x)v^{-p'}(x)\Psi(v(x)) \quad\textrm{ for a.e. }x. 
	\end{equation}
	Fix $x$ and $Q$ a cube containing $x$. By taking $\lambda=\|u^*\|_{\varphi, Q, v^{1-q'}}$ we have that
	\begin{align*}
	\frac{1}{|Q|}\int_Q \varphi\left(\frac{u^*v^{1-p'}}{\lambda}\right)&=\frac{1}{|Q|}\int_{Q\cap\{v^{1-p'}\leq e\}} \varphi\left(\frac{u^*v^{1-p'}}{\lambda}\right)+\frac{1}{|Q|}\int_{Q\cap\{v^{1-p'}>e\}} \varphi\left(\frac{u^*v^{1-p'}}{\lambda}\right)\\
	&=I_1+I_2.
	\end{align*}
	By using that $\varphi$ is submultiplicative and Lemma~\ref{lema: relacion maximal generalizada con peso A1},  for $I_1$ we have that
	\[I_1\leq \frac{C}{|Q|}\int_Q\varphi\left(\frac{u^*}{\|u^*\|_{\varphi, Q , v^{1-q'}}}\right)\leq \frac{C}{|Q|}\int_Q\varphi\left(\frac{u^*}{\|u^*\|_{\varphi, Q}}\right)\leq C.\]
	 In order to estimate $I_2$, let $\varepsilon=(q'-1)/(p'-1)-1$. Observe that $\varepsilon>0$ since $p'<q'$. By applying Lemma~\ref{lema: acotacion LlogL por potencia}  we get that  
	 \begin{align*}
	 	I_2&\leq \frac{C}{|Q|}\int_Q \varphi\left(\frac{u^*(y)}{\lambda}\right)v^{(1-p')(1+\varepsilon)}\,dy\\
	&= \frac{C}{|Q|}\int_Q \varphi\left(\frac{u^*(y)}{\lambda}\right)v^{1-q'}(y)\,dy\\
	&\leq C\frac{v^{1-q'}(Q)}{|Q|}\left(\frac{1}{v^{1-q'}(Q)}\int_Q\varphi\left(\frac{u^*(y)}{\|u^*\|_{\varphi,Q,v^{1-q'}}}\right)v^{1-q'}(y)\,dy\right)\\
	&\leq C\left[v^{1-q'}\right]_{A_1}\max\left\{1,v^{1-q'}(x)\right\},
	 \end{align*}
	 since $v^{1-q'}\in A_1$. Therefore, we can conclude that
	 \begin{align*}
	 \|u^*v^{1-p'}\|_{\varphi,Q}&\leq C\lambda\max\left\{1,v^{1-q'}(x)\right\}\\
	 &= C\max\left\{1,v^{1-q'}(x)\right\}\|u^*\|_{\varphi,Q,v^{1-q'}}\\
	 &\leq C H(x)M_{\varphi,v^{1-q'}}u^*(x),
	 \end{align*}
	 for every cube $Q$ that contains $x$ and where $H(x)=\max\left\{1,v^{1-q'}(x)\right\}$. This finally yields
	 \[M_\varphi\left(u^*v^{1-p'}\right)(x)\leq C H(x)M_{\varphi,v^{1-q'}}u^*(x).\]
	 To obtain \eqref{eq: teo acotacion mixta general para T - relacion maximales con y sin peso} it only remains to show that $H(x)v^{p'}(x)\leq \Psi(v(x))$. This can be achieved by noting that when $v\leq 1$ we have  $v^{p'}H=v^{p'+1-q'}$, and we get $v^{p'}H=v^{p'}$ otherwise.
	 
	We now return to the estimate of $I$. We have that \label{pag: estimacion de I}
	\begin{align*}
	I&\leq \frac{C}{t^{p'}}\int_{\mathbb{R}^n}|gv|^{p'}\left(M_{\varphi, v^{1-q'}}u^*\right)v^{-p'}\Psi(v)\\
	&\leq \frac{C}{t}\int_{\mathbb{R}^n}|g|\left(M_{\varphi, v^{1-q'}}u^*\right)\Psi(v)\\
	&\leq \frac{C}{t}\int_{\mathbb{R}^n\backslash \Omega}|f|\left(M_{\varphi, v^{1-q'}}u\right)M(\Psi(v))+\frac{C}{t}\int_{\Omega}|f_{Q_j}^v|\left(M_{\varphi, v^{1-q'}}u^*\right)\Psi(v).
	\end{align*}
	Let $u_j^*=u\mathcal{X}_{\mathbb{R}^n\backslash RQ_j}$. For the integral over $\Omega$ we shall use Lemma~\ref{lema: comparacion peso por caracteristica} to obtain
	\begin{align*}
	\frac{C}{t}\int_{\Omega}|f_{Q_j}^v|\left(M_{\varphi, v^{1-q'}}u^*\right)\Psi(v)&\leq\frac{C}{t}\sum_j\int_{Q_j}|f_{Q_j}^v|\left(M_{\varphi, v^{1-q'}}u_j^*\right)\Psi(v)\\
	&\leq \frac{C}{t}\sum_j\inf_{Q_j} \left(M_{\varphi, v^{1-q'}}u_j^*\right)\frac{(\Psi\circ v)(Q_j)}{v(Q_j)}\int_{Q_j}|f|v\\
	&\leq \frac{C}{t}[v]_{\mathrm{RH}_\infty}\left(\inf_{Q_j} M_{\varphi, v^{1-q'}}u_j^*\right)\frac{(\Psi\circ v)(Q_j)}{|Q_j|}\int_{Q_j}|f|\\
	&\leq \frac{C}{t}[v]_{\mathrm{RH}_\infty}\sum_j\int_{Q_j}|f|\left(M_{\varphi, v^{1-q'}}u^*\right)M(\Psi(v))\\
	&\leq \frac{C}{t}[v]_{\mathrm{RH}_\infty}\int_{\Omega}|f|\left(M_{\varphi, v^{1-q'}}u\right)\,M(\Psi(v)),
	\end{align*}
	so we achieved the desired estimate for $I$. 
	
	For $II$, by virtue of  Lemma~\ref{lema: potencia negativa de RHinf en A1 y positivas en RHinf} and the fact that $v$ is doubling we have
	\begin{align*}
	uv(Q_j^*)&\leq v^{1-q'}(Q_j^*)\|u\|_{\varphi,Q_j^*,v^{1-q'}}\left[\frac{1}{v^{1-q'}(Q_j^*)}\int_{Q_j^*}\varphi\left(\frac{u}{\|u\|_{\varphi,Q_j^*,v^{1-q'}}}\right)v^{1-q'}\right]\left(\sup_{Q_j^*} v^{q'}\right)\\
	&\leq \left[v^{q'}\right]_{\rm{RH}_\infty}\frac{v^{1-q'}(Q_j^*)}{|Q_j^*|}v^{q'}(Q_j^*)\|u\|_{\varphi,Q_j^*,v^{1-q'}}\\
	&\leq C\left[v^{q'}\right]_{\rm{RH}_\infty}\left[v^{1-q'}\right]_{A_1}v(Q_j)\|u\|_{\varphi,Q_j^*,v^{1-q'}}\\
	&\leq \frac{C}{t}\int_{Q_j}|f|v\left(M_{\varphi,v^{1-q'}}u\right)\\
	&\leq \frac{C}{t}\int_{Q_j}|f|\left(M_{\varphi,v^{1-q'}}u\right)M(\Psi(v)),
	\end{align*}
	where in the last inequality we have used  that $\Psi(s)\geq s$.
	Therefore,\label{pag: estimacion de II}
	\begin{align*}
	uv(\Omega^*)&=\sum_j uv(Q_j^*)\\
	&\leq \frac{C}{t}\sum_j \int_{Q_j}|f|\left(M_{\varphi,v^{1-q'}}u\right)M(\Psi(v))\\
	& \leq\frac{C}{t} \int_{\mathbb{R}^n}|f|\left(M_{\varphi,v^{1-q'}}u\right)M(\Psi(v)).
	\end{align*}
	It only remains to estimate $III$. We denote $A_{j,k}=\{x: 2^{k-1}r_j<|x-x_{Q_j}|\leq 2^{k}r_j\}$, where $r_j=R\ell(Q_j)/2$ and use the integral representation of $T$ given by \eqref{eq: representacion integral de T}. By combining \eqref{eq: hj integra cero contra v} with the smoothness condition \eqref{eq:prop del nucleo} on $K$  we get
\begin{align*}
III&\leq uv\left(\left\{x\in \mathbb{R}^n\backslash\Omega^*: \sum_j\frac{|T(h_jv)|}{v}>\frac{t}{2}\right\}\right)\\
&\leq \frac{C}{t}\int_{\mathbb{R}^n\backslash\Omega^*}\sum_j|T(h_jv)(x)|u(x)\,dx\\
&\leq \frac{C}{t}\sum_j\int_{\mathbb{R}^n\backslash Q_j^*}\left|\int_{Q_j}h_j(y)v(y)(K(x-y)-K(x-x_{Q_j}))\,dy\right|u(x)\,dx\\
&\leq \frac{C}{t}\sum_j\int_{Q_j}|h_j(y)|v(y)\int_{\mathbb{R}^n\backslash Q_j^*}|K(x-y)-K(x-x_{Q_j})|u_j^*(x)\,dx\,dy\\
&\leq \frac{C}{t}\sum_j\int_{Q_j}|h_j(y)|v(y)\sum_{k=1}^{\infty}{\int_{A_{j,k}}}|K(x-y)-K(x-x_{Q_j})|u_j^*(x)\,dx\,dy\\
&= \frac{C}{t}\sum_j\int_{Q_j}|h_j(y)|v(y)\sum_{k=1}^{\infty}{\int_{A_{j,k}}}\frac{|y-x_{Q_j}|}{|x-x_{Q_j}|^{n+1}}u_j^*(x)\,dx\,dy.
\end{align*}
For every fixed $y\in Q_j$ we have that
\begin{align*}
\sum_{k=1}^{\infty}{\int_{A_{j,k}}}\frac{|y-x_{Q_j}|}{|x-x_{Q_j}|^{n+1}}u_j^*(x)\,dx&\leq C\sum_{k=1}^{\infty}\frac{\sqrt{n}\ell(Q_j)}{2r_j}\frac{2^{-k}}{(2^kr_j)^n}\int_{B\left(x_{Q_j},2^kr_j\right)}u_j^*(x)\,dx\\
&\leq CMu_j^*(y)\sum_{k=1}^\infty 2^{-k}\\
&\leq CMu_j^*(y).
\end{align*}
Therefore, by Lemma~\ref{lema: comparacion peso por caracteristica} we obtain
\begin{align*}
III&\leq \frac{C}{t}\sum_j\int_{Q_j}|f|v\left(\inf_{Q_j}Mu_j^*\right)+C\sum_j\int_{Q_j}|f_{Q_j}^v|v\left(\inf_{Q_j}Mu_j^*\right)\\
&=A+B.
\end{align*}
Applying Lemma~\ref{lema: relacion maximal generalizada con peso A1} we have that $Mu\leq M_\varphi u\leq CM_{\varphi,v^{1-q'}}u$ and this yields\label{pag: estimacion de III}
\[A\leq \frac{C}{t}\int_{\Omega} |f|\left(M_{\varphi,v^{1-q'}}u\right)M(\Psi(v)).\]
On the other hand,
\begin{align*}
B&\leq \frac{C}{t}\sum_j \int_{Q_j}|f|v\left(\inf_{Q_j}M_j^*u\right)\\
&\leq \frac{C}{t}\sum_j \int_{Q_j}|f|vM_\varphi u\\
&\leq \frac{C}{t}\int_{\mathbb{R}^n}|f|\left(M_{\varphi,v^{1-q'}}u\right) M(\Psi(v)).
\end{align*}	
This completes the proof when $u$ is bounded, with a constant $C$ that does not depend on $u$. For the general case, given $u$ we set $u_N(x)=\min\{u(x),N\}$ for every $N\in\mathbb{N}$. Then we have that
\begin{align*}
u_Nv\left(\left\{x\in \mathbb{R}^n: \frac{|T(fv)(x)|}{v(x)}>t\right\}\right)&\leq \frac{C}{t}\int_{\mathbb{R}^n}|f|\left(M_{\varphi, v^{1-q'}}u_N\right)M(\Psi(v))\\
&\leq\frac{C}{t}\int_{\mathbb{R}^n}|f(x)|\left(M_{\varphi, v^{1-q'}}u\right)M(\Psi(v))
\end{align*}
for every $N\in\mathbb{N}$ and with a positive constant $C$ that does not depend on $N$. Since $u_N\nearrow u$, the monotone convergence theorem allows us to show that the estimate for $u$ also holds.\qedhere
\end{proof}

\medskip

\begin{proof}[Proof of Theorem~\ref{teo: F-S para T Hormander}]
	We shall first consider the case $u$ bounded. Fix $t>0$ and, as in the proof of Theorem~\ref{teo: acotacion mixta general para T}, perform the Calder\'on-Zygmund decomposition of $f$ at level $t$ with respect to the measure $d\mu(x)=v(x)\,dx$. Therefore we obtain a collection of disjoint dyadic cubes $\{Q_j\}_{j=1}^\infty$, $\Omega$, $g$ and $h$ as in that proof. We take $Q_j^*=cRQ_j$, where $R$ is the dimensional constant given by Lemma~\ref{lema: comparacion peso por caracteristica} and $c\geq 1$ is the constant appearing on the $L^{\Phi}-$Hörmander condition for $K$. By using the same notation as in Theorem~\ref{teo: acotacion mixta general para T} we get
	\begin{align*}
	uv\left(\left\{x\in \mathbb{R}^n: \left|\frac{T(fv)}{v}\right|>t\right\}\right)&\leq uv\left(\left\{x\in \mathbb{R}^n\backslash \Omega^*: \left|\frac{T(gv)}{v}\right|>\frac{t}{2}\right\}\right) + uv(\Omega^*)\\
	&+uv\left(\left\{x\in \mathbb{R}^n\backslash \Omega^*: \left|\frac{T(hv)}{v}\right|>\frac{t}{2}\right\}\right)\\
	&= I+II+III.
	\end{align*}
	Since $\tilde \Phi$ has a lower type $s$, we have $M_s\lesssim M_{\tilde\Phi}$. Recall that $p'>r$ since $p<r'$. 
	By using the fact that $M_sg$ is an $A_1$ weight for every measurable and nonnegative function $g$ such that $M_s g$ is finite almost everywhere, we apply Tchebychev inequality with $p'$ and Theorem~\ref{teo: tipo Coifman T Hormander} in order to get	
	\begin{align*}
	I&\leq\frac{C}{t^{p'}}\int_{\mathbb{R}^n} |T(gv)|^{p'}uv^{1-p'}\mathcal{X}_{\mathbb{R}^n\backslash \Omega^*}\\
	&\leq\frac{C}{t^{p'}}\int_{\mathbb{R}^n} |T(gv)|^{p'}M_s\left(u^*v^{1-p'}\right)\\
	&\leq\frac{C}{t^{p'}}\int_{\mathbb{R}^n} \left[M_{r}(gv)\right]^{p'}M_s\left(u^*v^{1-p'}\right)\\
	&\leq \frac{C}{t^{p'}}\int_{\mathbb{R}^n} (|g|v)^{p'}M_s\left(u^*v^{1-p'}\right)\\
	&\leq \frac{C}{t^{p'}}\int_{\mathbb{R}^n} (|g|v)^{p'}M_{\tilde\Phi}\left(u^*v^{1-p'}\right).
	\end{align*}
	Notice that we could apply Theorem~\ref{teo: tipo Coifman T Hormander} because $\|T(gv)\|_{L^{p'}(w)}<\infty$, where $w=M_s(u^*v^{1-p'})\in~A_1$. Indeed, if we first assume $w\in A_1\cap L^\infty$, we get
	\[\int_{\mathbb{R}^n} |T(gv)|^{p'}w\leq \|w\|_{L^\infty}\int_{\mathbb{R}^n} |T(gv)|^{p'}\leq C\|w\|_{L^\infty}\int_{\mathbb{R}^n}(|g|v)^{p'}<\infty,\]
	since $f$ is bounded with compact support and $T$ is bounded in $L^{p'}$ because $K\in H_{\Phi}\subset H_1$ (see, for example, \cite{javi}). For the general case, we can take $w_N=\min\{w,N\}$ for every $N\in\mathbb{N}$. Then every $w_N$ belongs to $A_1$ and $[w_N]_{A_1}\leq [w]_{A_1}$. This allows us to deduce the inequality in Theorem~\ref{teo: tipo Coifman T Hormander} for $w_N$ and $C$ independent of $N$. By letting $N\to\infty$ we are done. 
		
	We proceed now to estimate $M_{\tilde\Phi}(u^*v^{1-p'})(x)$. We shall prove that 
	\begin{equation}\label{eq: teo: F-S para T Hormander - relacion maximales con y sin peso}
	M_{\tilde\Phi}\left(u^*v^{1-p'}\right)(x)\leq C\left(M_{\varphi_p,v^{1-q'}}u^*\right)(x)v^{-p'}(x)\Psi(v(x)) \quad\textrm{ for a.e. }x. 
	\end{equation}
	Fix $x$ and $Q$ a cube containing $x$. By hypothesis and Lemma~\ref{lema: potencia negativa de RHinf en A1 y positivas en RHinf}, we have that $v^{1-q'}$ is an $A_1$ weight. By taking  $\lambda=\|u^*\|_{\varphi_p, Q, v^{1-q'}}$, we have that 
	\[\frac{1}{|Q|}\int_Q \tilde\Phi\left(\frac{u^*v^{1-p'}}{\lambda}\right)=\frac{1}{|Q|}\int_{Q\cap\{v^{1-p'}\leq 1\}}+\frac{1}{|Q|}\int_{Q\cap\{v^{1-p'}> 1\}}=A+B.\]
	It is easy to see that $A$ is bounded by a constant $C$, since $\tilde \Phi(z) \lesssim \varphi_p(z)$ for large $z$ and $\|u^*\|_{\varphi_p,Q}\leq \lambda$. In order to estimate $B$, we shall apply the upper type of $\tilde\Phi$ combined with \eqref{eq: preliminares - relacion entre inversas para Holder}, the fact that $\eta(t)\leq Ct^{p'}$ and  $t\lesssim \varphi_p(t)$ (see Remark~\ref{obs: relacion entre funciones de Young}) to get
	\begin{align*}
	B&\leq \frac{C}{|Q|}\int_{Q\cap\{v^{1-p'}>1\}} \tilde\Phi\left(\frac{u^*}{\lambda}\right)v^{r(1-p')}\\
	&=\frac{C}{|Q|}\int_Q \tilde\Phi\left(\left(\frac{u^*}{\lambda}\right)^{1/p}\left(\frac{u^*}{\lambda}\right)^{1/p'}\right)v^{1-q'}\\
	&\leq \frac{C}{|Q|}\int_Q\varphi_p\left(\frac{u^*}{\lambda}\right)v^{1-q'}+\frac{C}{|Q|}\int_Q\frac{u^*}{\lambda}v^{1-q'}\\
	&\leq \frac{1}{|Q|}\int_Q\varphi_p\left(\frac{u^*}{\|u^*\|_{\varphi_p,Q,v^{1-q'}}}\right)v^{1-q'}+C\left(\frac{1}{|Q|}\int_Q\frac{u^*}{\|u^*\|_{\varphi_p,Q,v^{1-q'}}}v^{1-q'}\right)\\
	&\leq C\frac{v^{1-q'}(Q)}{|Q|}\\
	&\leq C\left[v^{1-q'}\right]_{A_1}\max\{1,v^{1-q'}(x)\}.
	\end{align*}
	 By Lemma~\ref{lema: relacion maximal generalizada con peso A1} we have that
	\[\left\|u^*v^{1-p}\right\|_{\tilde\Phi,Q}\leq C\max\{1,v^{1-q'}(x)\}\lambda= C\max\{1,v^{1-q'}(x)\}\|u^*\|_{\varphi_p,Q,v^{1-q'}}\]
	and by proceeding similarly as in the proof of Theorem~\ref{teo: acotacion mixta general para T} we can obtain \eqref{eq: teo: F-S para T Hormander - relacion maximales con y sin peso}. This allows us to finish the estimate of $I$ by following similar lines as in page~\pageref{pag: estimacion de I}. For $II$, we use again that $t\lesssim \varphi_p(t)$ combined with the fact that $v^{1-q'}\in A_1$. We also notice that $p'<q'$ since $r>1$, so we get $\Psi(v)\geq v$. By following the same argument as in page~\pageref{pag: estimacion de II} we get the desired bound.
	
	We finish with the estimate of $III$. We denote $A_{j,k}=\{x: 2^{k-1}r_j<|x-x_{Q_j}|\leq 2^{k}r_j\}$, where $r_j=cR\ell(Q_j)/8$ and use the integral representation of $T$ given by \eqref{eq: representacion integral de T}. By combining \eqref{eq: hj integra cero contra v} with the $L^{\Phi}-$Hörmander condition on \eqref{eq: condicion Hormander} $K$ we get
	\begin{align*}
	III&\leq uv\left(\left\{x\in \mathbb{R}^n\backslash\Omega^*: \sum_j\frac{|T(h_jv)|}{v}>\frac{t}{2}\right\}\right)\\
	&\leq \frac{C}{t}\int_{\mathbb{R}^n\backslash\Omega^*}\sum_j|T(h_jv)(x)|u_j^*(x)\,dx\\
	&\leq \frac{C}{t}\sum_j\int_{\mathbb{R}^n\backslash Q_j^*}\left|\int_{Q_j}h_j(y)v(y)(K(x-y)-K(x-x_{Q_j}))\,dy\right|u_j^*(x)\,dx\\
	&\leq \frac{C}{t}\sum_j\int_{Q_j}|h_j(y)|v(y)\int_{\mathbb{R}^n\backslash Q_j^*}|K(x-y)-K(x-x_{Q_j})|u_j^*(x)\,dx\,dy\\
	&\leq \frac{C}{t}\sum_j\int_{Q_j}|h_j(y)|v(y)\sum_{k=1}^{\infty}{\int_{A_{j,k}}}|K(x-y)-K(x-x_{Q_j})|u_j^*(x)\,dx\,dy\\
	&= \frac{C}{t}\sum_j\int_{Q_j}|h_j(y)|v(y)F_j(y)\,dy,
	\end{align*}
	where 
	\[F_j(y)=\sum_{k=1}^{\infty}{\int_{A_{j,k}}}|K(x-y)-K(x-x_{Q_j})|u_j^*(x)\,dx.\]
	We shall prove that there exists a positive constant $C$ such that
	\[F_j(y)\leq CM_{\tilde \Phi}u_j^*(y),\]
	for every $y\in Q_j$. Indeed, by applying generalized Hölder inequality with the functions $\Phi$ and $\tilde \Phi$, since $K\in H_\Phi$ we have that
	\begin{align*}
	F_j(y)&\leq C\sum_{k=1}^\infty (2^kr_j)^n\left\|(K(\cdot-(y-x_{Q_j})-K(\cdot))\mathcal{X}_{A_{j,k}}\right\|_{\Phi, B\left(x_{Q_j}, 2^kr_j\right)}\left\|u_j^*\right\|_{\tilde\Phi,B\left(x_{Q_j}, 2^kr_j\right)}\\
	&\leq C_\Phi M_{\tilde \Phi}u_j^*(y).
	\end{align*}
	Thus, by Lemma~\ref{lema: comparacion peso por caracteristica} we get
	\[III\leq \frac{C}{t}\sum_j\int_{Q_j}|f|v\left(\inf_{Q_j}M_{\tilde\Phi}u_j^*\right)+C\sum_j\int_{Q_j}|f_{Q_j}^v|v\left(\inf_{Q_j}M_{\tilde\Phi}u_j^*\right).\]
	Recall that $M_{\tilde\Phi}\lesssim M_{\varphi_p}\lesssim M_{\varphi_p,v^{1-q'}}$. This allows us to conclude the estimate similarly as we did in page~\pageref{pag: estimacion de III}. The proof is complete when $u$ is bounded. For the general case we can proceed as in the proof of Theorem~\ref{teo: acotacion mixta general para T}.	
\end{proof}

% BIBLIOGRAFIA
\bibliographystyle{plain}
%%%%%%%%%%%%%%%%%%%%%%%%%%%

\def\cprime{$'$}

\end{document}